\documentclass[12pt]{amsart}
\usepackage{amscd,amssymb,amsmath,latexsym,graphicx,amsxtra}

\hoffset=0in
\voffset=0in
\oddsidemargin=0in
\evensidemargin=0in
\topmargin=0in
\headheight=0in
\headsep=0in
\textwidth=6.5in
\textheight=9.0in
\footskip=30pt
\setlength{\parskip}{0.2cm}

\DeclareFontFamily{OT1}{pzc}{}
\DeclareFontShape{OT1}{pzc}{m}{it}{<-> s * [1.10] pzcmi7t}{}
\DeclareMathAlphabet{\mathpzc}{OT1}{pzc}{m}{it}

\newtheorem{thm}{Theorem}[section]
\newtheorem{lem}[thm]{Lemma}
\newtheorem{prop}[thm]{Proposition}
\newtheorem{cor}[thm]{Corollary}

\newtheorem{rmk}{Remark}[section]
\newtheorem{ex}[rmk]{Example}


\newcommand \SnG{\text{$\Sigma^n(G)$}}
\newcommand \OnG{\text{$\Omega^n(G)$}}
\newcommand \OnH{\text{$\Omega^n(H)$}}

\newcommand \ep{\text{$e^{\prime}$}}

\newcommand \Rm{\text{$\mathbb{R}^m$}}

\newcommand \Hom{\text{Hom$(G,\mathbb{R})$ }}
\newcommand \bdR{\text{$\partial_{\infty}\Rm$}}

\newcommand \hgs{\text{$H_{\gamma,s}$}}
\newcommand \cgs{\text{$C_{\gamma,s}$}}
\newcommand \xgs{\text{$\Gamma_{\gamma,s}$}}

\newcommand \HxK{\text{$H \times K$}}
\linespread{1.2}

\begin{document}
\pagestyle{plain}

\title{The Geometric Invariants of Group Extensions}
\author{Nic Koban}
\address{Department of Mathematics, University of Maine Farmington, Farmington, ME 04938, USA}
\email{nicholas.koban@maine.edu}
\author{Peter Wong}
\address{Department of Mathematics, Bates College, Lewiston, ME 04240, USA}
\email{pwong@bates.edu}
\date{\today}
\thanks{The second author was supported in part by the National Science Foundation DMS-0805968.}
\keywords{$\Sigma$-invariants, $\Omega$-invariants, property $R_{\infty}$, twisted conjugacy classes.}
\subjclass[2000]{Primary: 20F65, 20E45; secondary: 55M20, 57M07.}
\maketitle

\begin{abstract}
In this paper, we compute the $\Sigma^n(G)$ and $\Omega^n(G)$ invariants when $1 \to H \to G \to K \to 1$ is a short exact sequence of finitely generated groups with $K$ finite. We also give sufficient conditions for $G$ to have the $R_{\infty}$ property in terms of $\Omega^n(H)$ and $\Omega^n(K)$ when either $K$ is finite or the sequence splits.  As an application, we construct a group $F\rtimes_{\rho} \mathbb Z_2$ where $F$ is the R.\ Thompson's group $F$ and show that $F\rtimes_{\rho} \mathbb Z_2$ has the $R_{\infty}$ property while $F$ is not characteristic.
\end{abstract}

\section{Introduction}

The Bieri-Neumann-Strebel-Renz invariants (and their homological analogs) $\Sigma^n(G)$  of a group have been useful in obtaining finiteness properties of subgroups of $G$ with abelian quotients. Connections to other areas of mathematics have been made while the computation of these invariants remains difficult in general. In fact, the so-called direct product conjecture for $\Sigma^n (\HxK)$ has been shown to be false in general (\cite{MMV} for the homotopical version and \cite{schutz} for the homological, although in \cite{BG}, the product conjecture for the homological version of the $\Sigma$-invariants is proven over a field). On the other hand, an analogous geometric invariant $\Omega^n(G)$ has been introduced and has proven somewhat easier to compute. For example, the product formula $\Omega^n(\HxK)=\Omega^n(H) \circledast \Omega^n(K)$, the spherical join of $\Omega^n(H)$ and $\Omega^n(K)$, holds. More recently, the product formula for $\Omega^n$ has been employed to yield new families of groups for which the $R_{\infty}$ property holds \cite{KW}. The $R_{\infty}$ property arises from the study of twisted conjugacy classes of elements of the fundamental group in topological fixed point theory (see \S~\ref{tcomega}).

Motivated by \cite{KW} and the product formula for $\Omega^n$ \cite{K2}, we conjectured similar formulas for $\Omega^1$ for finite and split extensions in the unpublished manuscripts \cite{KW3} and \cite{KW4}. We have since found counterexamples to the formulas.  Although these formulas are false in general, we have found use for the formulas which we discuss in \S~\ref{tcomega} of this paper.

The main objective of this paper is to use the $\Sigma$- and $\Omega$-invariants to detect the $R_{\infty}$ property for an extension $1 \to H \to G \to K \to 1$ where either $K$ is finite or the sequence splits.  As an application, we construct a finitely presented group with $\Omega^2$ consisting of a single discrete point and hence with the $R_{\infty}$ property whereas $\Omega^1$ contains two antipodal points from which the $R_{\infty}$ property cannot be detected.  This is the first example known to the authors where $n=2$ is needed to obtain this property where $n=1$ fails to satisfy the conditions.

This paper is organized as follows.  The invariants $\Omega^n$ were defined in \cite{K1} and are analogs of the Bieri-Neumann-Strebel-Renz invariants $\Sigma^n$ defined in \cite{BNS} for $n=1$ and in \cite{BR} for $n \geq 2$.  We recall these definitions in \S~\ref{sigma} and \S~\ref{omega}.  In \S~\ref{hom}, we describe the real vector space of characters, Hom$(G,\mathbb{R})$, for a finite and split extension $G$ in terms of Hom$(H,\mathbb{R})$, Hom$(K,\mathbb{R})$, and the action of $K$ on $H$.  In \S~\ref{finite}, we prove the formula for $\Sigma^1(G)$ where $G$ is a finite extension.  In \S~\ref{omega-finite}, we investigate the conjectured $\Omega$-formula for finite extensions, and give examples where this formula fails and conditions when the formula holds.  In \S~\ref{tcomega}, we give conditions using the $\Omega$-invariant to detect the $R_{\infty}$ property in finite and split extensions.

\section{The Geometric Invariants $\Sigma$ and $\Omega$}\label{invar}
Let $G$ be a finitely generated group with generating set $S$.  In this section, we define two invariants of $G$:
	\begin{enumerate}
	\item the Bieri-Neumann-Strebel (or BNS) invariant $\Sigma$, and
	\item the invariant $\Omega$.
	\end{enumerate}
	
\subsection{The BNS invariant $\Sigma$}\label{sigma}
The set \Hom of homomorphisms from $G$ to the additive group of reals is a real vector space with dimension equal to the ${\mathbb Z}$-rank of the abelianization of $G$, so \Hom $\cong \Rm$ for some $m$.  Thus, there is a natural isomorphism between \Hom and the real vector space $G/G^{\prime} \otimes_{\mathbb Z} \mathbb{R}$.  The group $G$ acts on $G/G^{\prime} \otimes_{\mathbb Z} \mathbb{R}$ by left multiplication on the $G/G^{\prime}$ component, and this gives an action of $G$ on \Hom (by translations).

Let $\Gamma$ denote the Cayley graph of $G$ with respect to a chosen generating set.  Define $\mathpzc{h}:\Gamma \to \Rm$ to be the abelianization map on the vertices and extend linearly on the edges.  Denote by $\bdR$ the boundary at infinity of $\Rm$ (ie.\ the set of geodesic rays in $\Rm$ initiating from the origin).  This is isomorphic to the {\it character sphere of $G$} defined as the set of equivalence classes $S(G) := \{ [\chi] | \chi \in {\rm Hom}(G,\mathbb{R}) - \{ 0 \} \}$ where $\chi_1 \sim \chi_2$ if and only if $\chi_1 = r\chi_2$ for some $r > 0$.  Let $e \in \bdR$ and let $\gamma$ be the geodesic ray defining $e$.  We denote by $H_e$ the half space perpendicular to $\gamma$ that contains all of the image of $\gamma$.  Denote by $\Gamma_e$ the largest subgraph of $\Gamma$ that is contained in $\mathpzc{h}^{-1}(H_e)$.  The direction $e \in \Sigma^1(G)$ if $\Gamma_e$ is path connected.

We give an equivalent definition for $\Sigma^1(G)$ that we will use in this paper.  It will also be useful to see the motivation for the definition of $\Omega^1(G)$.  Let $e \in \bdR$ and let $\gamma$ be a geodesic ray defining $e$.  For each $s \in {\mathbb R}$, let $\hgs$ be the closed half-space orthogonal to $\gamma$ so that $\hgs \cap \gamma([0,\infty)) = \gamma([s,\infty))$.  For each $s \in {\mathbb R}$, denote by $\xgs$ the largest subgraph of $\Gamma$ contained in $\mathpzc{h}^{-1}(\hgs)$.  The direction $e \in \Sigma^1(G)$ if and only if for every $s \geq 0$, there exists $\lambda = \lambda(s) \geq 0$ such that any two points $u,v \in \Gamma_{\gamma,s}$ can be joined by a path in $\Gamma_{\gamma,s-\lambda}$ and $s-\lambda(s) \to \infty$ as $s \to \infty$.

\subsection{The invariant $\Omega$}\label{omega}
In the compactified space $\Rm \cup \bdR$, the compactified half-spaces play
the role of neighborhoods of the point $e \in \bdR$, but this
gives an unsatisfactory topology to $\Rm \cup \bdR$.
From the point of view of topology, it is more natural to have a similar definition to $\Sigma^1(G)$ using ``ordinary" neighborhoods of $e$.  A basis for these neighborhoods consists of ``truncated cones".  For each $s \geq 0$, define the {\it truncated cone} $\cgs := Cone_{\theta}(\gamma) \cap \hgs$ where $Cone_{\theta}(\gamma)$ is the closed cone of angle $\theta$ and vertex $\gamma(0)$ and $\theta := \arctan(\frac{1}{s})$ if $s > 0$ and $\theta := \frac{\pi}{2}$ if $s=0$.  For each $s \geq 0$, denote by $\Delta_{\gamma,s}$ the largest subgraph of $\Gamma$ contained in $\mathpzc{h}^{-1}(\cgs)$.  We say that $e \in \Omega^1(G)$ if and only if there exists $s_0 \geq 0$ such that for each $s \geq s_0$, there exists $\lambda = \lambda(s) \geq 0$ such that any two points $u,v \in \Delta_{\gamma,s}$ can be joined by a path in $\Delta_{\gamma,s-\lambda}$ and $s-\lambda(s) \to \infty$ as $s \to \infty$.

When $n > 1$, we can make the following changes to the definitions to obtain $\Sigma^n(G)$ and $\Omega^n(G)$: replace $G$ being finitely generated with type $F_n$, replace the Cayley graph $\Gamma$ with an $n$-dimensional, $(n-1)$-connected CW-complex $X$ on which $G$ acts freely as a group of cell permuting homeomorphisms with $G \backslash X$ a finite complex, $\mathpzc{h}$ is a $G$-map from $X$ to \Hom, and replace the path-connected property in the definition with the analogous $(n-1)$-connected property (see \cite{BR} for $\Sigma$ and \cite{K1} for $\Omega$).   We should mention that $\Omega^n(G)$ is always a closed set while $\Sigma^n(G)$ is open.

The following theorem relates the invariants $\SnG$ and $\OnG$.
\begin{thm}\label{hemi}\cite[Theorem 3.1]{K1}
Let $e \in \bdR$.  Then $e \in \OnG$ if and only if $\ep \in \SnG$ for every $\ep$ in an open $\frac{\pi}{2}$-neighborhood of $e$.
\end{thm}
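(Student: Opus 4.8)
The plan is to prove both implications by comparing the truncated-cone filtration $\{\Delta_{\gamma,s}\}$ of $X$ that defines $\OnG$ at $e$ with the half-space filtrations $\{\Gamma_{\gamma',t}\}$ that define $\SnG$ at the various directions $e'$, all phrased through the ``essential $(n-1)$-connectedness'' reformulations of \S\ref{sigma} and \S\ref{omega} (with the $(n-1)$-connectedness condition in place of path-connectedness when $n>1$). Throughout I use that $G$ acts freely and cocompactly on $X$ with $\mathpzc{h}$ a $G$-map, so that: $\mathpzc{h}$ sends the vertex set onto a cocompact lattice in $\mathbb{R}^m$ with the $G$-translations realizing every lattice vector; any obstruction to $(n-1)$-connectedness occurring at any scale is carried by a finite subcomplex of bounded size; and if such an obstruction bounds inside some $\Delta_{\gamma,s'}$ or $\Gamma_{\gamma',t'}$, it bounds there along a finite subcomplex. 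The two geometric inputs are: \textbf{(I)} if $e'$ makes angle $\alpha<\tfrac{\pi}{2}$ with $e$, then $C_{\gamma,s}\subseteq H_{\gamma',\,s\cos\alpha-\sin\alpha}$ for all large $s$, with $s\cos\alpha-\sin\alpha\to\infty$; and \textbf{(II)}, dually, $C_{\gamma,s}$ is, up to the single half-space $H_{\gamma,s}$, the intersection $\bigcap\{H_{e'}:\angle(e,e')\le\tfrac{\pi}{2}-\arctan(1/s)\}$ of the half-spaces over a closed cap of directions that exhausts the open $\tfrac{\pi}{2}$-neighborhood of $e$ as $s\to\infty$. These caps are compact and $\SnG$ is open, so one may always pass to finite subfamilies with uniform descent constants.

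For ``$e\in\OnG\Rightarrow e'\in\SnG$ whenever $\angle(e,e')<\tfrac{\pi}{2}$'': fix such an $e'$. By input (I) the subcomplexes $\Delta_{\gamma,s}$ sit inside the $e'$-half-space filtration at heights tending to $\infty$. Given a finite subcomplex $\omega$ high up in the $e'$-half-space filtration that witnesses failure of essential $(n-1)$-connectedness there, translate it by a suitable $g\in G$ so that $\mathpzc{h}(g\omega)$ lies within uniformly bounded distance of a point far out along $\gamma$; this changes the relevant character values only boundedly and places $g\omega$ in $\Delta_{\gamma,s}$ for $s$ as large as desired. Since $e\in\OnG$, $g\omega$ bounds in $\Delta_{\gamma,s-\lambda}$ with $s-\lambda(s)\to\infty$, hence, by (I) applied to $s-\lambda$, inside a $\Gamma_{g\gamma',t}$ with $t\to\infty$; translating back by $g^{-1}$ shows $\omega$ bounds not-too-far-down in the $e'$-filtration. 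Doing this compatibly over all such $\omega$ gives $e'\in\SnG$.

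For the converse, assume every $e'$ with $\angle(e,e')<\tfrac{\pi}{2}$ lies in $\SnG$; we must show $e\in\OnG$. Let $\omega\subseteq\Delta_{\gamma,s}$ be a finite obstruction with $s$ large. By input (II), $\omega$ sits high up in the $e'$-half-space filtration for every $e'$ in the cap $U_s:=\{e':\angle(e,e')\le\tfrac{\pi}{2}-\arctan(1/s)\}$; by compactness of $U_s$ and openness of $\SnG$ we may fix a finite $F\subseteq U_s$, all in $\SnG$ with a common descent function, whose polyhedral cone $\bigcap_{e'\in F}H_{e'}$ approximates the round cone $\bigcap_{e'\in U_s}H_{e'}$ to within a controlled amount. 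One then appeals to the intersection behaviour of $\SnG$: a finite subcomplex that bounds not-too-far-down in each $e'$-half-space filtration, $e'\in F$, bounds not-too-far-down in the filtration by the subcomplexes over $\bigcap_{e'\in F}H_{e',t_{e'}}$; combining this with (II) and the sandwiching of $C_{\gamma,s}$ between two such polyhedral cones produces a fill of $\omega$ in $\Delta_{\gamma,s-\mu}$ with $s-\mu(s)\to\infty$, i.e.\ $e\in\OnG$.

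The main obstacle is the last step of the converse: an intersection formula for $\SnG$ over an arbitrarily large finite family of half-spaces, \emph{with uniform control} on the depth of the fillings, so that the fill actually lands in the thin truncated cone and not merely in the fat polyhedral intersection. This is where the finiteness-property content of membership in $\SnG$ must be combined with the cocompactness of the $G$-action and the compactness of the caps $U_s$; by contrast, the forward implication and all of the cone/half-space bookkeeping are more direct once (I) and (II) are available.
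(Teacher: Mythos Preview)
The paper does not contain a proof of this theorem: it is quoted verbatim from \cite[Theorem 3.1]{K1} and used as a black box thereafter, so there is no in-paper argument to compare your proposal against. Any detailed comparison would have to be made with the original proof in \cite{K1}, not with anything in the present paper.

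That said, a brief comment on your sketch on its own terms. The forward implication is essentially sound: the containment in your input (I) together with the cocompact $G$-action lets you transport an obstruction in the $e'$-half-space filtration into a deep truncated cone, fill it there by the $\Omega^n$ hypothesis, and transport back; the bounded errors from the translation are harmless because $s-\lambda(s)\to\infty$. The converse, however, is where your outline is genuinely incomplete, and you say as much. The step ``a finite subcomplex that bounds not-too-far-down in each $e'$-half-space filtration, $e'\in F$, bounds not-too-far-down in the filtration by $\bigcap_{e'\in F}H_{e',t_{e'}}$'' is not a formal consequence of membership of each $e'$ in $\Sigma^n(G)$; essential $(n-1)$-connectedness of each half-space filtration separately does not automatically give essential $(n-1)$-connectedness of the filtration by their intersections, and the uniform descent you need is exactly the content one has to supply. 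Compactness of the cap and openness of $\Sigma^n(G)$ give you a finite subfamily, but they do not by themselves produce a common $\lambda$ that works simultaneously, nor do they let you iterate fillings across different directions without losing control. This is precisely the place where the proof in \cite{K1} does real work, and your proposal would need an explicit argument here (for instance, an inductive ``push in one direction, then the next'' procedure with quantitative bookkeeping, or an appeal to a controlled-connectivity criterion over the intersection) before it could be called a proof.
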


Given $\Sigma^n(G)$, we can completely determine
$\Omega^n(G)$: for each $e \in \bdR$, $e \in \Omega^n(G)$ if and only if
the open $\frac{\pi}{2}$-neighborhood of $e$ is in $\Sigma^n(G)$.
However, it is not the case that $\Omega^n(G)$ completely determines
$\Sigma^n(G)$; examples of such groups are given in \cite[\S~1.3]{K1}.

The following theorem completely describes $\Omega^n(H \times K)$ in terms of $\OnH$ and $\Omega^n(K)$.  This theorem will be useful in \S~\ref{tcomega}.
\begin{thm}\label{product-formula}\cite[Theorem 3.8]{K2}
$\Omega^n(H \times K) = \OnH \circledast \Omega^n(K)$ where $\circledast$ represents the spherical join.
\end{thm}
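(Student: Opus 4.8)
Proof proposal for Theorem \ref{product-formula} ($\Omega^n(H \times K) = \Omega^n(H) \circledast \Omega^n(K)$).

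The plan is to reduce the statement to Theorem \ref{hemi}, which converts membership in $\Omega^n$ into a condition about an open $\frac{\pi}{2}$-neighborhood lying inside $\Sigma^n$, and then to invoke the known behavior of $\Sigma^n$ on direct products. First I would set up the identification of the character sphere: since $\mathrm{Hom}(H \times K, \mathbb{R}) \cong \mathrm{Hom}(H,\mathbb{R}) \oplus \mathrm{Hom}(K,\mathbb{R})$, we get a natural decomposition $\partial_\infty \mathbb{R}^{m_H+m_K} = \partial_\infty \mathbb{R}^{m_H} \circledast \partial_\infty \mathbb{R}^{m_K}$ as a spherical join, where a direction $e$ is written as $e = \cos t \cdot e_H + \sin t \cdot e_K$ with $t \in [0,\frac{\pi}{2}]$, $e_H \in \partial_\infty \mathbb{R}^{m_H}$, $e_K \in \partial_\infty \mathbb{R}^{m_K}$ (the endpoints $t=0$ and $t=\frac{\pi}{2}$ recovering the two factor spheres). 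Under this identification, $e \in \Omega^n(H) \circledast \Omega^n(K)$ means precisely that $e_H \in \Omega^n(H)$ and $e_K \in \Omega^n(K)$ whenever the corresponding join coordinates are active, with the degenerate cases handled by the convention for joins.

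The key step is to combine Theorem \ref{hemi} with the product formula for $\Sigma^n$. By Theorem \ref{hemi}, $e \in \Omega^n(H \times K)$ iff every $e'$ in the open $\frac{\pi}{2}$-ball around $e$ (in the sphere $\partial_\infty \mathbb{R}^{m_H+m_K}$) lies in $\Sigma^n(H \times K)$. I would use the characterization of $\Sigma^n(H \times K)$ in terms of the factor invariants — membership of a character $[\chi_H + \chi_K]$ being governed by $[\chi_H] \in \Sigma^n(H)$ and $[\chi_K] \in \Sigma^n(K)$ together with the lower-dimensional $\Sigma$-data, in the form available from the Bieri–Geoghegan / Meinert type results on products (this is where one must be careful, since the full product \emph{conjecture} for $\Sigma^n$ is false, but the needed one-directional containment that controls $\Omega^n$ does hold). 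The geometric content is that a $\frac{\pi}{2}$-neighborhood of $e = \cos t\, e_H + \sin t\, e_K$ in the join sphere projects onto $\frac{\pi}{2}$-neighborhoods of $e_H$ and of $e_K$ in the respective factor spheres, and conversely every character near $e$ decomposes with its $H$- and $K$-parts near $e_H$ and $e_K$. Thus "every neighbor of $e$ is in $\Sigma^n(H \times K)$" translates into "every neighbor of $e_H$ is in $\Sigma^n(H)$ and every neighbor of $e_K$ is in $\Sigma^n(K)$", which by Theorem \ref{hemi} applied to each factor is exactly $e_H \in \Omega^n(H)$ and $e_K \in \Omega^n(K)$, i.e. $e \in \Omega^n(H) \circledast \Omega^n(K)$.

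The main obstacle I expect is making the passage through $\Sigma^n$ of the product rigorous: one cannot simply cite a product formula for $\Sigma^n$ since it is known to fail, so the argument must use only the inclusion that survives — essentially that $\Sigma^n(H \times K)$ contains (and for the neighborhood-of-$e$ purposes is controlled by) the join-type locus built from the $\Sigma^k$ of the factors for $k \le n$ — or else work directly with the CW-model $X_H \times X_K$ for $H \times K$ and the truncated cones $C_{\gamma,s}$ in the product, showing that the $(n-1)$-connectivity of $\Delta_{\gamma,s}$ in the product splits (via a Künneth/Mayer–Vietoris or homotopy-join argument) into the $(n_H-1)$- and $(n_K-1)$-connectivity of the corresponding truncated-cone subcomplexes in the factors, where $n_H + n_K = n$. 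Handling the degenerate join coordinates ($t = 0$ or $t=\frac{\pi}{2}$, and the cases where one factor is all or none of its sphere) and the bookkeeping of the connectivity indices in the join — so that the "ascending connectivity" condition "$s - \lambda(s) \to \infty$" transfers correctly — is the routine but delicate part; the conceptual heart is the neighborhood-decomposition in the join sphere together with Theorem \ref{hemi}.
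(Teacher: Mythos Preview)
This theorem is not proved in the present paper; it is quoted from \cite{K2} without argument, so there is no in-paper proof to compare against. Your sketch, however, contains a concrete error in the key geometric step.

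You assert that the open $\frac{\pi}{2}$-neighborhood of $e = \cos t\, e_H + \sin t\, e_K$ in the join sphere projects into the $\frac{\pi}{2}$-neighborhoods of $e_H$ and of $e_K$ in the factor spheres, so that ``every neighbor of $e$ lies in $\Sigma^n(H\times K)$'' translates into the conjunction ``every neighbor of $e_H$ lies in $\Sigma^n(H)$ \emph{and} every neighbor of $e_K$ lies in $\Sigma^n(K)$''. This is false. If $t$ is close to $\frac{\pi}{2}$, then any $e' = \cos t'\, e'_H + \sin t'\, e'_K$ with $t'$ close to $\frac{\pi}{2}$ and $e'_K$ near $e_K$ satisfies $\langle e, e'\rangle > 0$ for \emph{arbitrary} $e'_H$, including $e'_H = -e_H$. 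What survives is only the disjunction: from
\[
\langle e, e'\rangle \;=\; \cos t\cos t'\,\langle e_H,e'_H\rangle \;+\; \sin t\sin t'\,\langle e_K,e'_K\rangle \;>\; 0
\]
one deduces that $\langle e_H,e'_H\rangle > 0$ \emph{or} $\langle e_K,e'_K\rangle > 0$, not both.

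With this correction the route through Theorem~\ref{hemi} can be completed, but it then rests on two facts about $\Sigma^n$ of a direct product that you did not isolate and that replace any appeal to the (false) $\Sigma^n$ product conjecture: (i) if $\chi|_H \ne 0$ and $[\chi|_H] \in \Sigma^n(H)$, then $[\chi]\in\Sigma^n(H\times K)$; and (ii) for $\chi$ vanishing on $K$, one has $[\chi]\in\Sigma^n(H\times K)$ if and only if $[\chi|_H]\in\Sigma^n(H)$. Both are elementary --- take the product model $X_H\times X_K$ and note that crossing a filtration with the fixed $(n-1)$-connected complex $X_K$ preserves essential $(n-1)$-connectedness. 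Applying (ii) to the pure directions $e'=e'_H$ (resp.\ $e'=e'_K$) inside the $\frac{\pi}{2}$-ball of $e$ gives $\Omega^n(H\times K)\subseteq \Omega^n(H)\circledast\Omega^n(K)$; the disjunction above together with (i) gives the reverse inclusion. Your proposal as written does not contain this argument, and the equivalence you state in the third paragraph is not the correct one.
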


\section{The Space Hom$(G,\mathbb{R})$}\label{hom}

Consider a group extension $G$ given by the following short exact sequence of groups
$$
1 \to H \stackrel{i}{\to} G \stackrel{p}{\to} K \to 1.
$$
Without loss of generality we may assume that the monomorphism $i$ is an inclusion so that $H$ is identified with its image in $G$ as a normal subgroup. Let $\tilde K$ be a left transversal for $H$, i.e., $\tilde K=\nu(K)$ where $\nu:K\to G$ is an injective function so that $(p\circ \nu)(k)=k$ for all $k\in K$. In addition, we assume that $\nu(1_K)=1_G$. There is an {\it ``action"} of $\tilde K$ on ${\rm Hom}(H,\mathbb R)$ given by
$$
(\tilde k \cdot \phi)(h)=\phi(\tilde k h \tilde k^{-1})
$$
for all $h\in H$ where $\phi \in {\rm Hom}(H,\mathbb R)$.

Let
$$
Fix \hat{\nu}=\{\phi \in {\rm Hom}(H,\mathbb R)|\tilde k \cdot \phi=\phi, ~\forall \tilde k \in \tilde K=\nu(K)\}.
$$
Note that $Fix \hat \nu$ is a vector subspace of ${\rm Hom}(H,\mathbb R)$.

\begin{rmk} If $\nu_1,\nu_2:K\to G$ are two left transversals for $H$ then $Fix\hat \nu_1=Fix \hat \nu_2$. To see this, let $\phi\in Fix \hat \nu_1$ and $\tilde k_2\in \nu_2(K)$. Write $\tilde k_2=\nu_2(k)$ for some $k\in K$. Since $(p\circ \nu_i)(k)=k$ for $i=1,2$, it follows that $\nu_2(k)=\nu_1(k)h'$ for some $h'\in H$. Now,
\begin{equation*}
\begin{aligned}
(\nu_2(k)\cdot \phi)(h)&=\phi(\nu_2(k)h\nu_2(k)^{-1}) \\
                       &=\phi(\nu_1(k)h'h(h')^{-1}\nu_1(k)^{-1}) \\
                       &=\phi(h'h(h')^{-1}) \quad \text{since~} \phi\in Fix \hat \nu_1 \\
                       &=\phi(h).
\end{aligned}
\end{equation*}
Thus, $\phi \in Fix \hat \nu_2$. A similar argument shows that if $\phi \in Fix \hat \nu_2$ then $\phi \in Fix \hat \nu_1$.
\end{rmk}

In the special case when $G=H\rtimes_{\rho} K$ is the semi-direct product given by an action $\rho:K\to {\rm Aut}(H)$, the canonical left transversal $\nu$ is the section given by $\nu(k)=(1,k)$. Then $Fix \hat \nu=\{\phi \in {\rm Hom}(H,\mathbb R)| \hspace{2pt} \phi(\rho(k)(h))=\phi(h)$ for all $h \in H, k \in K \}$. In this case, we also write $Fix \hat \rho$ for $Fix \hat \nu$ as the fixed subspace induced by the action $\rho$.

Following \cite{johnson}, we will use the presentations for $H$ and $K$ to derive a presentation for $G$.  Let $H \cong \langle A | R \rangle$ and $K \cong \langle B | S \rangle$.  Since every word in $S$ is equivalent to the identity, $\nu(S) \subseteq ker(p) = H$.  Thus, every word $\nu(S)$ is equivalent to some word $w_s$ in $A$.  Denote by $X := \{ \nu(s)w_s^{-1} | s \in S$ and $w_s$ is the equivalent word to $\nu(s)$ in $A \}$.  For each $\tilde{b} \in \nu(B)$ and each $a \in A$, there is a word $w_{a,b} \in H$ such that $\tilde{b}a\tilde{b}^{-1} = w_{a,b}$.  Let $Y := \{ \tilde{b}a\tilde{b}^{-1}w_{a,b}^{-1} | \tilde{b} \in \nu(B); a \in A \}$.  Then $G \cong \langle A \cup \nu(B) | R \cup X \cup Y \rangle$.

\begin{prop}\label{finitefix}
Let $G$ be a finite extension given by the short exact sequence of groups
$$
1\to H \stackrel{i}{\to} G \stackrel{p}{\to} K \to 1
$$
where $K$ is finite and let $\nu: K\to G$ be a left transversal for $H$ such that $\nu(1_K)=1_G$. Then
${\rm Hom}(G,\mathbb R) \cong Fix\hat{\nu}$.
\end{prop}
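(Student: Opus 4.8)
The plan is to prove that the restriction homomorphism $\mathrm{res}\colon {\rm Hom}(G,\mathbb R)\to {\rm Hom}(H,\mathbb R)$, $\chi\mapsto \chi\circ i$, is an $\mathbb R$-linear isomorphism onto $Fix\,\hat{\nu}$. That its image lies in $Fix\,\hat{\nu}$ is immediate: since $(\mathbb R,+)$ is abelian, every $\chi\in{\rm Hom}(G,\mathbb R)$ vanishes on commutators, so for $h\in H$ and $\tilde k\in \tilde K$ we get $(\tilde k\cdot(\chi\circ i))(h)=\chi(\tilde k h\tilde k^{-1})=\chi(h)$, i.e.\ $\chi\circ i$ is $\tilde K$-invariant. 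Injectivity is also easy: if $\chi|_H=0$ then $\chi$ factors through $G/H\cong K$, and ${\rm Hom}(K,\mathbb R)=0$ because $K$ is finite while $\mathbb R$ is torsion-free. So the content of the proposition is surjectivity, which I expect to be the one step requiring real work.

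For surjectivity, fix $\phi\in Fix\,\hat{\nu}$; I want a character of $G$ restricting to $\phi$. First I would upgrade the $\tilde K$-invariance of $\phi$ to full invariance under conjugation by all of $G$: writing an arbitrary $g\in G$ as $g=\nu(k)h_0$ with $h_0\in H$ and using that $\phi$ kills $[H,H]$ together with $\phi\in Fix\,\hat{\nu}$, one gets $\phi(ghg^{-1})=(\nu(k)\cdot\phi)(h_0hh_0^{-1})=\phi(h)$ for every $h\in H$. Then I would build the extension by a transfer/averaging construction. For $g\in G$ and $k\in K$ put $h(g,k):=\nu(p(g)k)^{-1}g\,\nu(k)$; applying $p$ shows $h(g,k)\in H$, and a short computation gives the identity $h(g_1g_2,k)=h(g_1,p(g_2)k)\,h(g_2,k)$. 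Now define $\Phi(g):=\sum_{k\in K}\phi\big(h(g,k)\big)$ and set $\chi:=\tfrac{1}{|K|}\Phi$. Since $k\mapsto p(g_2)k$ permutes $K$, the displayed identity shows that $\Phi$, hence $\chi$, is a homomorphism $G\to\mathbb R$; and for $h\in H$ each summand is $\phi\big(\nu(k)^{-1}h\,\nu(k)\big)=\phi(h)$ by the $G$-conjugation invariance just established, so $\Phi(h)=|K|\,\phi(h)$ and $\chi\circ i=\phi$. The division by $|K|$ is precisely where finiteness of $K$ is used, and it is legitimate because $\mathbb R$ is divisible.

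Putting the three observations together yields the isomorphism ${\rm Hom}(G,\mathbb R)\cong Fix\,\hat{\nu}$; the Remark above makes the right-hand side independent of the chosen transversal. The main obstacle is the surjectivity step. An alternative route to it, closer to the presentation $G\cong\langle A\cup\nu(B)\mid R\cup X\cup Y\rangle$ recalled above, would be to define $\chi$ on generators by $\chi(a)=\phi(a)$ for $a\in A$ --- which automatically kills the relators in $R$, and, because $\phi\in Fix\,\hat{\nu}$, also those in $Y$ --- and then choose the values $\chi(\nu(b))$ so as to kill the relators in $X$; solvability of that linear system is exactly the vanishing of the obstruction class $[\phi\circ f]$ in $H^2(K;\mathbb R)$, where $f$ is the factor set of the extension, and $H^2(K;\mathbb R)=0$ because $K$ is finite and $\mathbb R$ is a $\mathbb Q$-vector space.
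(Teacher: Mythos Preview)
Your proof is correct, and your main argument for surjectivity differs genuinely from the paper's. Both arguments show that the restriction map $\chi\mapsto\chi\circ i$ is an isomorphism onto $Fix\,\hat\nu$, and the verification that the image lands in $Fix\,\hat\nu$ is the same. For the inverse, however, the paper works directly with the presentation $G\cong\langle A\cup\nu(B)\mid R\cup X\cup Y\rangle$: since $K$ is finite, each generator $b\in B$ satisfies $b^m=1$ in $K$ for some $m$, so $\tilde b^{\,m}=w\in H$, and the extension is defined on generators by $\hat\phi(\tilde b):=\phi(w)/m$; the relators in $Y$ are killed because $\phi\in Fix\,\hat\nu$, and those in $X$ are claimed to follow. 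Your transfer construction $\chi(g)=\tfrac{1}{|K|}\sum_{k\in K}\phi\big(\nu(p(g)k)^{-1}g\,\nu(k)\big)$ is coordinate-free, avoids any choice of generators or presentation, and makes both the homomorphism property and the restriction identity transparent via the cocycle identity $h(g_1g_2,k)=h(g_1,p(g_2)k)\,h(g_2,k)$; it also isolates exactly where finiteness of $K$ enters (the sum over $K$ and the division by $|K|$). The alternative you sketch at the end---extending along the presentation and interpreting the solvability of the resulting linear system as vanishing of an obstruction in $H^2(K;\mathbb R)$---is essentially the paper's approach, recast in cohomological language that explains \emph{why} the ad hoc definition $\hat\phi(\tilde b)=\phi(w)/m$ can be made consistently across all relators.
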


\begin{proof} Suppose $\phi \in Fix\hat{\nu}$.  Thus, $\phi$ is defined on $A$, so we need only define $\phi$ on $B$ so that it satisfies the relations $X$ and $Y$ of $G$ to get a homomorphism from $G$ to the reals.  Since $K$ is finite, for each $b \in B$, there is a relation in $S$ of the form $b^m$ for some integer $m \geq 1$.  Thus, there is a word $w$ in $A$ such that $\tilde{b}^m w^{-1} = 1$, so $\phi(\tilde{b}) = \frac{\phi(w)}{m}$.  Since $\phi \in Fix\hat{\nu}$, the relations in $Y$ are satisfied, and obviously the relations in $X$ are satisfied.  Let $\hat{\phi}$ be this extension of $\phi$.  Define $T: Fix \hat \nu \to {\rm Hom} (G)$ by $T(\phi) = \hat{\phi}$.

If $\phi \in {\rm Hom}(G,\mathbb R)$ then define $Q:{\rm Hom}(G,\mathbb R) \to Fix \hat \nu$ by $Q(\phi)=\phi\circ i$. A priori, $Q(\phi)\in {\rm Hom}(H,\mathbb R)$. To see that the image actually lies in $Fix \hat \nu$, we note that
\begin{equation*}
\begin{aligned}
\tilde k\cdot (\phi\circ i))(h)&= (\phi\circ i)(\tilde k h\tilde k^{-1}) \\
                               &=\phi(\tilde k h \tilde k^{-1}) \\
                               &=\phi(\tilde k)\phi(h)\phi(\tilde k)^{-1} \qquad [\text{since~}\phi\in {\rm Hom}(G,\mathbb R)] \\
                               &=\phi(h).
\end{aligned}
\end{equation*}
It follows that $(\phi \circ i)\in Fix \hat \nu$. It is easy to see that $Q\circ T$ and $T\circ Q$ yield identity maps and thus the assertion follows.
\end{proof}

We now give the analogous proposition in the split extension case.

\begin{prop}\label{fix}
Let $G$ be a split extension given by the short exact sequence of groups
$$
1\to H \stackrel{i}{\to} G \stackrel{p}{\to} K \to 1
$$
and let $\nu: K\to G$ be a left transversal for $K$ such that $\nu(1_K)=1_G$. Then
${\rm Hom} (G,\mathbb R) \cong Fix \hat \rho \times {\rm Hom} (K,\mathbb R).$
\end{prop}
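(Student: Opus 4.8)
The plan is to follow the proof of Proposition~\ref{finitefix}, building mutually inverse $\mathbb R$-linear maps between ${\rm Hom}(G,\mathbb R)$ and $Fix\hat\rho\times{\rm Hom}(K,\mathbb R)$, with the extra ${\rm Hom}(K,\mathbb R)$ factor supplied by the splitting. Because the sequence splits we may take $\nu$ to be a section that is itself a group homomorphism (by the Remark, $Fix\hat\nu$ does not depend on the choice of transversal, so this loses nothing and identifies $Fix\hat\nu$ with $Fix\hat\rho$ for the conjugation action $\rho$ of $\nu(K)$ on $H$), and then every $g\in G$ has a unique normal form $g=h\,\nu(k)$ with $h\in H$ and $k\in K$.

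First I would define $Q\colon{\rm Hom}(G,\mathbb R)\to Fix\hat\rho\times{\rm Hom}(K,\mathbb R)$ by $Q(\phi)=(\phi\circ i,\ \phi\circ\nu)$. That $\phi\circ i$ lies in $Fix\hat\rho$ is exactly the computation already carried out in Proposition~\ref{finitefix}: $\phi(\nu(k)h\nu(k)^{-1})=\phi(h)$ since $\phi$ takes values in the abelian group $\mathbb R$; and $\phi\circ\nu\in{\rm Hom}(K,\mathbb R)$ because $\nu$ is a homomorphism. The map $Q$ is visibly linear.

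Next I would define $T\colon Fix\hat\rho\times{\rm Hom}(K,\mathbb R)\to{\rm Hom}(G,\mathbb R)$ by $T(\psi,\chi)(h\,\nu(k))=\psi(h)+\chi(k)$, which is well defined by uniqueness of the normal form. The one substantive point is that $T(\psi,\chi)$ is a homomorphism: from $(h_1\nu(k_1))(h_2\nu(k_2))=h_1\,\rho(k_1)(h_2)\,\nu(k_1k_2)$ one computes $T(\psi,\chi)$ of the left side to be $\psi(h_1)+\psi(\rho(k_1)(h_2))+\chi(k_1k_2)$, and this equals $\psi(h_1)+\psi(h_2)+\chi(k_1)+\chi(k_2)=T(\psi,\chi)(h_1\nu(k_1))+T(\psi,\chi)(h_2\nu(k_2))$ precisely because $\psi\in Fix\hat\rho$ absorbs the twist $\rho(k_1)$ and $\chi$ is additive on $K$. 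This is the step where the hypothesis $\psi\in Fix\hat\rho$ is used, and it is the only place any care is needed.

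Finally I would check that $Q$ and $T$ are mutually inverse: $Q(T(\psi,\chi))=(\psi,\chi)$ follows by evaluating on $h=h\,\nu(1_K)$ and on $\nu(k)$, while $T(Q(\phi))(h\,\nu(k))=\phi(h)+\phi(\nu(k))=\phi(h\,\nu(k))$ gives $T\circ Q=\mathrm{id}$; both maps are $\mathbb R$-linear, so they are inverse isomorphisms of real vector spaces and ${\rm Hom}(G,\mathbb R)\cong Fix\hat\rho\times{\rm Hom}(K,\mathbb R)$. Alternatively, one could run the argument through the presentation $G\cong\langle A\cup\nu(B)\mid R\cup X\cup Y\rangle$ of \cite{johnson} as in Proposition~\ref{finitefix}, using that for a split extension $\nu$ may be chosen so that the set $X$ is vacuous, that the relations $R$ and $S$ cut out ${\rm Hom}(H,\mathbb R)$ on the $A$-part and ${\rm Hom}(K,\mathbb R)$ on the $\nu(B)$-part, and that the relations $Y$ are satisfied exactly when the $A$-part lies in $Fix\hat\rho$. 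Either way, there is no real obstacle beyond the homomorphism check for $T$.
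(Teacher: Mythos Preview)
Your proof is correct and follows essentially the same approach as the paper's own proof: both construct mutually inverse linear maps $\phi\mapsto(\phi\circ i,\ \phi\circ\sigma)$ and $(\alpha,\beta)\mapsto\big(g=h\tilde k\mapsto\alpha(h)+\beta(k)\big)$, verify that $\phi\circ i$ lands in $Fix\hat\rho$ by the abelianness of $\mathbb R$, and check that the inverse map is a homomorphism using the $Fix\hat\rho$ condition on the normal-form product. The only cosmetic difference is that the paper writes the inverse as $\hat\alpha+\beta\circ\pi$ (with $\hat\alpha(h\tilde k):=\alpha(h)$), which is exactly your $T(\psi,\chi)$ in different notation.
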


\begin{proof}
Define $\Phi:{\rm Hom}(G,\mathbb R) \to Fix \hat \rho \times {\rm Hom}(K,\mathbb R)$ by $\phi \mapsto (\phi \circ i, \phi \circ \sigma)$.  To show that $\Phi$ is well-defined, we first show that $\phi \circ i \in Fix \hat \rho$.  Let $h \in H$ and $k \in K$, so

\begin{center}
$(k \cdot \phi \circ i)(h) = \phi \circ i(\tilde{k}h\tilde{k}^{-1}) = \phi(\tilde{k}h\tilde{k}^{-1}) = \phi(\tilde{k}) + \phi(h) - \phi(\tilde{k}) = \phi \circ i(h)$.
\end{center}

Define $\Psi: Fix \hat \rho \times {\rm Hom}(K,\mathbb R) \to {\rm Hom}(G,\mathbb R)$ by $(\alpha,\beta) \mapsto (\hat \alpha + \beta \circ \pi)$ where $\hat \alpha (g) = \hat \alpha (h\tilde{k}) := \alpha(h)$ and $\tilde{k} = \sigma \circ \pi (g)$.  Since $ker(\pi) = H$, $(\beta \circ \pi)(h)=0$ for all $h\in H$.  To show that $\hat \alpha$ is a homomorphism, we note that since $\alpha \in Fix \hat \rho$, we have $\alpha(\tilde{k}h\tilde{k}^{-1}) = \alpha(h)$.  Therefore,

\begin{center}
$\hat{\alpha}(g_1g_2) = \hat{\alpha}(h_1\tilde{k}_1 h_2\tilde{k}_2) = \hat{\alpha}(h_1(\tilde{k}_1 h_2 \tilde{k}_{1}^{-1})\tilde{k}_1\tilde{k}_2) = \alpha(h_1)\alpha(\tilde{k}_1 h_2 \tilde{k}_{1}^{-1}) = \alpha(h_1)\alpha(h_2) = \hat{\alpha}(g_1)\hat{\alpha}(g_2)$.
\end{center}

To see that $\Phi$ and $\Psi$ are inverses, we have

\begin{center}
$\Phi \circ \Psi(\alpha,\beta)(h,k) = ((\hat{\alpha} + \beta \circ \pi) \circ i, \displaystyle (\hat{\alpha} + \beta \circ \pi) \circ \sigma)(h,k) = (\hat{\alpha}(h) + \beta \circ \pi(h), \beta \circ \pi(\sigma(k))) = (\alpha(h),\beta(k))$
\end{center}

and

\begin{center}
$\Psi \circ \Phi (\phi)(g) = (\widehat{\phi \circ i} + \phi \circ \sigma \circ \pi)(g) = \widehat{\phi \circ i}(h\tilde{k}) + \phi(\sigma(\pi(h\tilde{k}))) = \phi(h) + \phi(\tilde{k}) = \phi(g)$.
\end{center}
\end{proof}

\section{The $\Sigma$-invariant for finite extensions}\label{finite}

In this section, we prove for a finite extension $1 \to H \to G \to K \to 1$ that $\Sigma^1(G)=\partial_{\infty} Fix \hat \nu \cap \Sigma^1(H)$.  We should mention that a more general result for finite index subgroups was given in \cite{MMV}.

\begin{thm}\label{index}\cite[Theorem 9.3]{MMV}
Suppose that $H \leq G$ is a subgroup of finite index, and that $\chi$ restricts to a non-zero homomorphism of $H$.  Then $[\chi|_H] \in \Sigma^n(H)$ if and only if $[\chi] \in \Sigma^n(G)$.
\end{thm}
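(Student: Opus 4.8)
The plan is to exhibit a single geometric model that computes $\Sigma^n(H)$ and $\Sigma^n(G)$ simultaneously, so that the two membership conditions become literally the same statement about one filtered space. First I would record the elementary bookkeeping. Since $[G:H]<\infty$, the group $G$ is of type $F_n$ if and only if $H$ is, so both invariants are defined; moreover the hypothesis that $\chi|_H$ is nonzero both gives a genuine point $[\chi|_H]\in S(H)$ and forces $\chi$ nonzero, so that $[\chi]\in S(G)$ as well, and both sides of the asserted equivalence make sense. Now choose an $n$-dimensional, $(n-1)$-connected CW-complex $X$ on which $G$ acts freely, cocompactly, and cellularly, which exists because $G$ is of type $F_n$. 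Restricting the action along the inclusion $i\colon H\hookrightarrow G$ makes $X$ into a free $H$-CW-complex, and because $[G:H]<\infty$ a fundamental domain for $H$ is a finite union of $G$-translates of a fundamental domain for $G$; hence the $H$-action is again cocompact. Thus the \emph{same} complex $X$ is a legitimate model for computing $\Sigma^n(H)$.

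Next I would transport the height function. Extend $\chi$ to a map $h\colon X\to\mathbb{R}$ with $h(g\cdot x)=h(x)+\chi(g)$ for all $g\in G$; concretely, compose the $G$-equivariant map $\mathpzc{h}\colon X\to{\rm Hom}(G,\mathbb{R})$ with the linear functional in the direction determined by $\chi$. Since $\chi|_H=\chi\circ i$, this very same $h$ satisfies $h(h'\cdot x)=h(x)+\chi|_H(h')$ for all $h'\in H$, so $h$ is also the height function defining the direction $[\chi|_H]$ for $H$. Writing $X_{\geq s}:=h^{-1}([s,\infty))$ for the associated half-space filtration (the complex-level analog of the subgraphs $\Gamma_{\gamma,s}$ of \S\ref{sigma}), the definition of $\Sigma^n$ recalled there says that $[\chi]\in\Sigma^n(G)$ if and only if the filtration $\{X_{\geq s}\}$ is essentially $(n-1)$-connected: for each $s\geq 0$ there is $\lambda=\lambda(s)\geq 0$ with the inclusion $X_{\geq s}\hookrightarrow X_{\geq s-\lambda}$ trivial on $\pi_k$ for all $k\leq n-1$ and $s-\lambda(s)\to\infty$ as $s\to\infty$. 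The identical criterion applied to the identical filtration characterizes $[\chi|_H]\in\Sigma^n(H)$. As both conditions refer to the single filtered space $\{X_{\geq s}\}$, they are equivalent, which is the assertion.

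The step I expect to be the main obstacle is justifying that this essential-connectivity criterion is genuinely group-independent, so that it may be read off from $X$ for $G$ and for $H$ interchangeably. Two points need care. First, $\Sigma^n$ must be independent of the chosen free cocompact $(n-1)$-connected model; this is the standard invariance underlying the definition, and it is precisely what licenses using the $G$-model $X$ to detect $\Sigma^n(H)$. Second, the \emph{uniformity} in the criterion—the existence of a single $\lambda(s)$ valid on all of $X_{\geq s}$—is exactly the feature that demands a cocompact action, and this is the only place the hypothesis $[G:H]<\infty$ enters: it guarantees that the $H$-action inherited from $G$ remains cocompact, so the qualitative condition is unchanged when we pass from the $G$-action to the $H$-action. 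Once these foundational points are in place, the remaining checks (that inclusion rescales characters by a positive factor only, so $[\chi|_H]$ is the correct direction on $S(H)$, and that the two half-space filtrations coincide on the nose) are routine.
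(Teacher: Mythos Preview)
The paper does not actually prove this statement: Theorem~\ref{index} is quoted from \cite[Theorem~9.3]{MMV} without argument, and the sentence immediately following it announces instead a direct geometric proof, for $n=1$ only, of the related Theorem~\ref{main}. That proof (via Proposition~\ref{inclusion} and the half of Theorem~\ref{main} argued there) works explicitly in Cayley graphs, pushing paths around using the normality of $H$ and the finiteness of $K$; it does not pass through Theorem~\ref{index} at all. So there is no in-paper proof against which to compare yours.

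Your approach is nonetheless correct and is essentially the standard argument for Theorem~\ref{index}: take one $(n-1)$-connected free cocompact $G$-complex $X$, observe that the restricted $H$-action remains free and cocompact because $[G:H]<\infty$, note that the height functions associated to $[\chi]$ and $[\chi|_H]$ give literally the same half-space filtration $\{X_{\ge s}\}$, and conclude that the essential $(n-1)$-connectivity condition reads identically on either side. The two items you flag as needing care---independence of $\Sigma^n$ from the choice of model, and the role of cocompactness in securing a uniform lag $\lambda(s)$---are exactly the points that carry the weight, and both are standard in the $\Sigma$-theory (see \cite{BR} or \cite{BG1}). Compared with the paper's hands-on $n=1$ argument in Cayley graphs, your single-model argument is cleaner and works uniformly in $n$, at the cost of invoking the model-independence of $\Sigma^n$ as a black box.
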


We give a geometric proof of Theorem~\ref{main} for $n=1$.  The authors would like to thank Dessislava Kochloukova for pointing us to the result in \cite{MMV}.

\begin{prop}\label{inclusion}
Given an extension $1 \to H \to G \to K \to 1$ where $K$ is finite, if $H$ is finitely generated, then $\Sigma^1(G) \subseteq \Sigma^1(H)$.
\end{prop}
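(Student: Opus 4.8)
The plan is to reprove by hand the $n=1$ case of one implication of Theorem~\ref{index}: I would work throughout with the half-space reformulation of $\Sigma^1$ from \S~\ref{sigma}, and transfer connectivity from the Cayley graph of $G$ to that of $H$ along the bounded ``projection'' $g\mapsto h$, where $g=h\tilde k$ with $h\in H$ and $\tilde k\in\nu(K)$. First I would set up the reduction. Fix $e\in\Sigma^1(G)$ and a character $\chi:G\to\mathbb R$ with $[\chi]=e$. By Proposition~\ref{finitefix}, $\chi|_H=\chi\circ i$ lies in $Fix\hat{\nu}$, and it is nonzero, since a character of $G$ vanishing on the finite-index subgroup $H$ would factor through the finite group $K$ and hence be trivial; so $[\chi|_H]$ is a genuine point of $\partial_\infty{\rm Hom}(H,\mathbb R)$, and under the identification of Proposition~\ref{finitefix} the claim $\Sigma^1(G)\subseteq\Sigma^1(H)$ is exactly the assertion $[\chi|_H]\in\Sigma^1(H)$. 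After choosing representatives, $\Gamma_{\gamma,s}$ is the full subgraph of the Cayley graph $\Gamma_G$ (generators $A\cup\nu(B)$) on the vertices $g$ with $\chi(g)\ge s$, and the analogous subgraph for $H$ is the full subgraph of $\Gamma_H$ (generators $A$) on the $h\in H$ with $\chi(h)\ge s$, up to a harmless positive rescaling of the parameter (the $\Sigma^1$-condition is scale-invariant).

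Next I would establish three uniform bounds, depending only on $G,H,K,A,B,\nu$. (i) Since $K$ is finite, $|\chi|$ is bounded on the finite set $\nu(K)$, say by $M$, so $|\chi(g)-\chi(h)|\le M$ whenever $g=h\tilde k$. (ii) If $g_1=h_1\tilde k_1$ and $g_2=h_2\tilde k_2$ are adjacent vertices of $\Gamma_G$, then $h_1^{-1}h_2=\tilde k_1(g_1^{-1}g_2)\tilde k_2^{-1}\in H$ has $(A\cup\nu(B))$-length at most $2C+1$, where $C$ bounds the lengths of the finitely many elements of $\nu(K)$; only finitely many elements of $H$ satisfy this, so their $A$-lengths are bounded by some $N$, whence $d_{\Gamma_H}(h_1,h_2)\le N$, along a geodesic of which $\chi$ moves by at most $N\max_{a\in A}|\chi(a)|$. (iii) If $g\in H$ then $\tilde k=1_G$ and $h=g$, by uniqueness of the decomposition and $\nu(1_K)=1_G$. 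Combining these: a path in $\Gamma_{\gamma,s}$ between two vertices of $H$, pushed down and interpolated with short $\Gamma_H$-geodesics, becomes a path in $\Gamma_H$ with the same endpoints lying at $\chi$-height $\ge s-D$, where $D:=M+N\max_{a\in A}|\chi(a)|$.

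Finally I would verify the criterion for $[\chi|_H]$. Given $s\ge 0$ and vertices $u,v$ of $\Gamma_H$ with $\chi(u),\chi(v)\ge s$ (a point interior to an edge being first moved to an endpoint, which stays in the subgraph by convexity of the half-space), both lie in $\Gamma_{\gamma,s}$, so since $e\in\Sigma^1(G)$ there is $\lambda(s)\ge 0$ with $s-\lambda(s)\to\infty$ and a path from $u$ to $v$ in $\Gamma_{\gamma,s-\lambda(s)}$; pushing it down yields a path from $u$ to $v$ in $\Gamma_H$ at $\chi$-height $\ge s-\lambda(s)-D$, so $\mu(s):=\lambda(s)+D$ witnesses the $\Sigma^1(H)$-condition at parameter $s$ and $s-\mu(s)\to\infty$; hence $[\chi|_H]\in\Sigma^1(H)$. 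I expect the main obstacle to be step (ii): one must know that $H\hookrightarrow G$ is a quasi-isometric embedding which is moreover compatible with the two height functions up to a bounded error — that pushing a path into $\Gamma_H$ neither lengthens it nor lowers its $\chi$-height by more than a uniform amount. Finite index of $H$ (together with finite generation, which keeps $A\cup\nu(B)$ finite) yields the quasi-isometry, while finiteness of $K$ yields the height control via the constant $M$; these are precisely the hypotheses the argument consumes.
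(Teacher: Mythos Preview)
Your proof is correct and follows the same strategy as the paper: take a $\Gamma_G$-path joining two vertices of $H$ and convert it to a $\Gamma_H$-path whose $\chi$-height has dropped by only a uniformly bounded amount, the bound coming from the finiteness of $K$ (hence of $\nu(K)$). The execution differs only in bookkeeping---you project each vertex $g=h\tilde k$ to its $H$-component and interpolate with short $\Gamma_H$-geodesics, whereas the paper rewrites the word $g_1\cdots g_m$ by using normality of $H$ to commute the $\nu(B)$-letters to the right---but both devices rest on the same two finiteness observations (your bounds (i) and (ii)), and your version makes the height control more transparent.
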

\begin{proof} Since $H$ is finitely generated and $K$ is finite, $G$ is finitely generated. First, we note that since ${\rm Hom}(G,\mathbb{R}) = Fix\hat{\nu}$, we have ${\rm Hom}(G,\mathbb R) \subseteq {\rm Hom}(H,\mathbb R)$ as a vector subspace.
Let $\Gamma_H$ and $\Gamma_G$ be the Cayley graphs of $H$ and of $G$ with respect to the generating sets $A$ and $A\cup \nu(B)$ respectively. For any height function $\mathpzc{h}:\Gamma_G \to {\rm Hom}(G,\mathbb R)$, we have an induced height function $\bar{\mathpzc{h}}:\Gamma_H \to {\rm Hom} (H,\mathbb R)$ such that the following diagram commutes:
\begin{equation*}
\begin{CD}
\Gamma_H     @>\bar{\mathpzc{h}}>>   {\rm Hom}(H,\mathbb R)\\
@V{\hat i}VV                           @AA{\hat Q}A\\
\Gamma_G         @>\mathpzc{h}>>    {\rm Hom}(G,\mathbb R)
\end{CD}
\end{equation*}
where $\hat Q$ is the induced inclusion ${\rm Hom}(G,\mathbb R) \subseteq {\rm Hom}(H,\mathbb R)$ from the proof of Proposition\ref{finitefix}, and $\hat i$ is induced by the inclusion $i: A\subseteq A \cup \tilde B$. In fact, $\hat i$ is the inclusion of $\Gamma_H$ as the subgraph induced by the vertices $\{h\nu(1_K)|h\in H\}$ in $\Gamma_G$.

Let $e\in \Sigma^1(G)$. We will show that $e\in \Sigma^1(H)$. First, take a half space $\bar H_{e,s}$ for $e$ in ${\rm Hom}(H,\mathbb R)$. Since ${\rm Hom}(G,\mathbb R) \subseteq {\rm Hom}(H,\mathbb R)$, there is a unique half space $H_{e,s}$ in ${\rm Hom}(G,\mathbb R)$ such that $H_{e,s}=\bar H_{e,s} \cap {\rm Hom}(G,\mathbb R)$. Take $x$ and $y$ to be vertices in $\bar{\mathpzc{h}}^{-1}(\bar H_{e,s})$. Since $\bar{\mathpzc{h}}=\hat Q \circ \mathpzc{h} \circ \hat i$, it follows that $\bar{\mathpzc{h}}^{-1}(\bar H_{e,s}) \subseteq \mathpzc{h}^{-1}(H_{e,s})$ and thus $x,y\in \mathpzc{h}^{-1}(H_{e,s})$, that is, there exist $g_1,g_2,..., g_m\in (A\cup \tilde B)$ such that $y=xg_1g_2...g_m$ and for each $1 \leq i \leq m$, $\mathpzc{h}(xg_1 \ldots g_i)\subset H_{e,s}$. Let $\mu := {\rm min} \{ \mathpzc{h}(\tilde{k} | \tilde{k} \in \nu(K) \}$.  We can rewrite the path from $y$ to $x$ as $h_1k_1h_2k_2 \ldots h_{p-1}k_{p-1}h_p$ where each $h_j \in H$ and each $k_j \in \nu(K)$ with the possibility that $h_1$ and $h_p$ are the identity element.  Since $H$ is normal, we have for each $1 \leq i \leq p-1$, $k_i h_{i+1} = \bar{h}_{i+1} k_i$, and $\mathpzc{h}(x\bar{h}_1 \ldots \bar{h}_{i+1}) \subset H_{e,s-\mu}$.  Therefore, $y = x\bar{h}_1 \ldots \bar{h}_p k_1 \ldots k_{p-1}$, and since $x, y, (\bar{h}_1 \ldots \bar{h}_p) \in H$, we have that $k_1 \ldots k_{p-1}$ is trivial (otherwise it would be a non-trivial element of $H$).  Thus, $y = x\bar{h}_1 \ldots \bar{h}_p$ and for each $1 \leq i \leq p$, $\mathpzc{h}(x\bar{h}_1 \ldots \bar{h}_i) \subset H_{e,s-\mu}$.
Hence $x$ and $ y$ are connected in $\bar{\mathpzc{h}}^{-1}(\bar H_{e,s})$ or $e\in \Sigma^1(H)$.
\end{proof}

\begin{rmk} Note that the inclusion ${\rm Hom}(G,\mathbb R) \subseteq {\rm Hom}(H,\mathbb R)$ of Proposition \ref{inclusion} can be strict. For instance, take $H=\mathbb Z^2$ and $G$ to be the fundamental group of the Klein bottle with $K=\mathbb Z_2$. Here, ${\rm rk}_{\mathbb Z}(G)=1 < 2={\rm rk}_{\mathbb Z}(\mathbb Z^2)$. Moreover, Proposition \ref{inclusion} is false if $K$ is not finite, e.g., take $G=\mathbb Z^2$ and $H=\mathbb Z$.
\end{rmk}

\begin{thm}\label{main}
Let $G$ be a finite extension given by the short exact sequence of groups
$$
1\to H \stackrel{i}{\to} G \stackrel{p}{\to} K \to 1
$$
where $K$ is finite, $H$ is finitely generated, and let $\nu: K\to G$ be a left transversal for $K$ such that $\nu(1_K)=1_G$.
Then, $\Sigma^1(G)=\partial_{\infty} Fix \hat \nu \cap \Sigma^1(H)$.
\end{thm}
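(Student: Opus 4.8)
The plan is to prove the two inclusions separately; the reverse inclusion $\partial_{\infty}Fix\hat\nu\cap\Sigma^1(H)\subseteq\Sigma^1(G)$ is the substantive part. For the inclusion $\Sigma^1(G)\subseteq\partial_{\infty}Fix\hat\nu\cap\Sigma^1(H)$ there is essentially nothing to do: $\Sigma^1(G)$ is by definition a subset of the character sphere $S(G)=\partial_{\infty}{\rm Hom}(G,\mathbb R)$, which equals $\partial_{\infty}Fix\hat\nu$ by Proposition~\ref{finitefix}, while Proposition~\ref{inclusion} gives $\Sigma^1(G)\subseteq\Sigma^1(H)$. (Here one uses, as in the commutative diagram in the proof of Proposition~\ref{inclusion}, that under ${\rm Hom}(G,\mathbb R)\cong Fix\hat\nu\subseteq{\rm Hom}(H,\mathbb R)$ a direction of $G$ is literally a direction of $H$, so the intersection on the right-hand side makes sense.)

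For the reverse inclusion I would argue geometrically, mirroring Proposition~\ref{inclusion}. Fix $e\in\partial_{\infty}Fix\hat\nu\cap\Sigma^1(H)$ and a geodesic ray $\gamma$ lying in the subspace ${\rm Hom}(G,\mathbb R)\subseteq{\rm Hom}(H,\mathbb R)$ and representing $e$; I use the half-space formulation of $\Sigma^1$. Since $H\trianglelefteq G$, every vertex of $\Gamma_G$ factors uniquely as $h\tilde k$ with $h\in H$ and $\tilde k\in\nu(K)$, so there is a set-retraction $r:\Gamma_G\to\Gamma_H$ given by $h\tilde k\mapsto h$. The two facts to establish — and this is where finiteness of $K$ is essential — are: (i) $r$ distorts heights boundedly, namely $\bar{\mathpzc{h}}(r(g))=\mathpzc{h}(g)-\mathpzc{h}(\tilde k)$ with $\|\mathpzc{h}(\tilde k)\|\le\mu:=\max_{\tilde k\in\nu(K)}\|\mathpzc{h}(\tilde k)\|$, each $\mathpzc{h}(\tilde k)$ being bounded because $\nu(K)$ is finite and $\mathpzc{h}(\tilde k)=\tfrac1m\mathpzc{h}(\tilde k^{m})$ whenever $\tilde k^{m}\in H$; and (ii) $r$ sends each edge of $\Gamma_G$ to a path in $\Gamma_H$ of length at most a constant $L$ depending only on the finite set $\{\tilde k a\tilde k^{-1}:\tilde k\in\nu(K),\,a\in A\}\subseteq H$ together with the finitely many coset-crossing corrections appearing in $G\cong\langle A\cup\nu(B)\mid R\cup X\cup Y\rangle$, so heights vary by at most $L\cdot\max_{a\in A}\|\bar{\mathpzc{h}}(a)\|$ along any such path. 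Finally, $\bar{\mathpzc{h}}$ and $\mathpzc{h}$ agree on the copy of $\Gamma_H$ sitting in $\Gamma_G$ as the coset of $1_G$, by the diagram in Proposition~\ref{inclusion}.

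Granting (i)--(ii), the connectivity argument runs as follows. Given $u,v\in\Gamma_{G,\gamma,s}$, the $\gamma$-heights of $r(u),r(v)$ drop by at most $\mu$, so $r(u),r(v)\in\Gamma_{H,\gamma,s-\mu}$; since $e\in\Sigma^1(H)$ there is $\lambda_H$ with $s-\lambda_H(s)\to\infty$ and a path from $r(u)$ to $r(v)$ in $\Gamma_{H,\gamma,s-\mu-\lambda_H(s-\mu)}$, which by the last sentence of the previous paragraph is also a path in $\Gamma_G$ at the same heights. It remains to connect $u$ to $r(u)$ and $v$ to $r(v)$ through the boundedly many intervening cosets: fixing once and for all a bounded-length word in $B^{\pm}$ for each element of $K$, these connecting paths cross at most ${\rm diam}(\Gamma_K)$ cosets and, by (ii), drop height by at most a constant $C$ depending only on ${\rm diam}(\Gamma_K)$ and the finite data. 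Concatenating the three pieces joins $u$ to $v$ inside $\Gamma_{G,\gamma,s-\lambda_G(s)}$ with $\lambda_G(s):=C+\mu+\lambda_H(s-\mu)$, and $s-\lambda_G(s)=\bigl[(s-\mu)-\lambda_H(s-\mu)\bigr]+\mu-C\to\infty$; hence $e\in\Sigma^1(G)$.

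The main obstacle is exactly the bookkeeping behind (i) and (ii) and the coset-connecting step: the $h$-part of a vertex can be arbitrarily long, but every quantity controlling the height drop is bounded purely because $K$ is finite — which is why the statement fails for infinite $K$ (cf.\ the remark after Proposition~\ref{inclusion}). I should also note that the reverse inclusion can be obtained without any geometry: for nonzero $\chi\in{\rm Hom}(G,\mathbb R)$ one has $\chi|_H\neq0$ (otherwise $\chi$ factors through the finite group $K$), and under ${\rm Hom}(G,\mathbb R)\cong Fix\hat\nu$ the restriction $\chi|_H=\chi\circ i$ is precisely the element of $Fix\hat\nu$ representing $e$; since $H$ has finite index in $G$, Theorem~\ref{index} with $n=1$ then yields $[\chi]\in\Sigma^1(G)$ from $[\chi|_H]\in\Sigma^1(H)$.
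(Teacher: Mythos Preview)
Your proof is correct and follows essentially the same approach as the paper: both directions use Propositions~\ref{finitefix} and~\ref{inclusion} for the easy inclusion, and for the reverse inclusion both retract a vertex $h\tilde k$ to $h$, bound the resulting height change by $\max_{\tilde k\in\nu(K)}\|\mathpzc{h}(\tilde k)\|$ (finite because $K$ is), connect the retracts in $\Gamma_H$ using $e\in\Sigma^1(H)$, and then reattach the short $\tilde k$-tails. Your item~(ii) is a bit more machinery than the paper carries---the paper simply writes the connecting path as $\tilde k_1^{-1}w\tilde k_2$ and notes its height is controlled---and your closing remark that the reverse inclusion also drops out of Theorem~\ref{index} is exactly what the paper records in Remark~\ref{ngeq1}.
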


\begin{proof}
By Prop. \ref{inclusion}, $\Sigma^1(G)\subseteq \Sigma^1(H)$. Since $K$ is finite, we have ${\rm Hom}(G,\mathbb{R})=Fix \hat \rho$ by Prop. \ref{finitefix}. Thus, $\Sigma^1(G)\subseteq \partial_{\infty}{\rm Hom}(G)=\partial_{\infty} Fix \hat \rho$, and we have $\Sigma^1(G)\subseteq \partial_{\infty} Fix \hat \rho \cap \Sigma^1(H)$.

Suppose $e\in \partial_{\infty} Fix \hat \rho \cap \Sigma^1(H)$ and suppose $\gamma$ is a geodesic ray defining $e$. To show that $e \in \Sigma^1(G)$, let $s \in \mathbb{R}$, and let $H_{\gamma,s}$ be the corresponding half space of $e$ in ${\rm Hom}(G,\mathbb{R})=Fix \hat \rho \subseteq {\rm Hom}(H,\mathbb R)$. Let $\pi: G \to G/G^{\prime} \cong \mathbb{Z}^m$ be the natural projection epimorphism.  Let $\Gamma_G$ be the Cayley graph of $G$ with respect to the generating set $A \cup \nu(B)$ from the presentations $H = \langle A | R \rangle$ and $K = \langle B | S \rangle$.  Define $\mathpzc{h}: \Gamma_G \to {\rm Hom}(G,\mathbb{R}) \cong \mathbb{R}^m$ by: $\mathpzc{h}(g) = \pi(g)$ for all vertices $g \in \Gamma_G$, and extend linearly on edges.  Choose two points $x,y \in \mathpzc{h}^{-1}(H_{\gamma,s})$.  Since $G$ is a finite extension, $x$ and $y$ can be uniquely written as $x=h_1 \tilde{k}_1$ and $y=h_2 \tilde{k}_2$ for some $h_1, h_2 \in H$ and $k_1, k_2 \in K$.
Let $\lambda_1 := {\rm min} \{ d(\mathpzc{h}(\tilde{k}), H_{\gamma,0}) | \tilde{k} \in \nu(K) \}$ where $d(\mathpzc{h}(\tilde{k}), H_{\gamma,0})$ is the distance between the point $\mathpzc{h}(\tilde{k})$ and the half space $H_{\gamma,0}$.  Since $x,y \in \mathpzc{h}^{-1}(H_{\gamma,s})$, we have $h_1, h_2 \in \mathpzc{h}^{-1}(H_{\gamma,s-\lambda_1})$.  Since $e \in \Sigma^1(H)$, there exists $\lambda_2 \geq 0$ such that there is a path $w$ in $(\Gamma_H)_{\gamma,s-\lambda_1 - \lambda_2} \subseteq (\Gamma_G)_{\gamma,s - \lambda_1 - \lambda_2}$ from $h_1$ to $h_2$. Thus, $\tilde{k}_1^{-1}w\tilde{k}_2$ is a path in $(\Gamma_G)_{\gamma-\lambda_1-\lambda_2}$ from $x$ to $y$.  Since $K$ is finite, $s - \lambda_1 - \lambda_2 \to \infty$ as $s \to \infty$.
\end{proof}

\begin{rmk}\label{ngeq1}
It should be noted that due to Theorem~\ref{index} and Proposition~\ref{finitefix} the result in Theorem~\ref{main} holds for all $n \geq 1$.  We will use this fact in later constructions in this paper.
\end{rmk}

\begin{cor} For the semi-direct product $H\rtimes_{\rho} K$ of a finitely generated group $H$ and a finite group $K$, we have
$\Sigma^1(H\rtimes_{\rho}K)=\Sigma^1(H) \cap \partial_{\infty}Fix \hat \rho$.
\end{cor}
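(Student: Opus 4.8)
The plan is to obtain this as an immediate specialization of Theorem~\ref{main}. Set $G = H \rtimes_{\rho} K$ and consider the canonical short exact sequence $1 \to H \stackrel{i}{\to} G \stackrel{p}{\to} K \to 1$ with $i(h) = (h,1_K)$ and $p(h,k) = k$. This sequence splits: the map $\nu : K \to G$, $\nu(k) = (1_H,k)$, is a section for $p$, and in particular a left transversal for $H$ with $\nu(1_K) = 1_G$. Since $K$ is finite and $H$ is finitely generated, the hypotheses of Theorem~\ref{main} are satisfied, so $\Sigma^1(G) = \partial_{\infty} Fix \hat \nu \cap \Sigma^1(H)$.

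It then remains only to identify $Fix \hat \nu$ with $Fix \hat \rho$. This was already recorded in \S\ref{hom}: for the canonical transversal $\nu(k) = (1_H,k)$ of a semi-direct product, the action of $\tilde K = \nu(K)$ on ${\rm Hom}(H,\mathbb R)$ is $(\nu(k)\cdot\phi)(h) = \phi(\nu(k)h\nu(k)^{-1}) = \phi(\rho(k)(h))$, whence $Fix \hat \nu = \{\phi \in {\rm Hom}(H,\mathbb R) \mid \phi(\rho(k)(h)) = \phi(h)\ \text{for all } h \in H,\, k \in K\} = Fix \hat \rho$. Substituting this into the equality of the previous paragraph yields $\Sigma^1(H\rtimes_{\rho}K) = \Sigma^1(H) \cap \partial_{\infty} Fix \hat \rho$, as claimed.

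There is essentially no obstacle here: the corollary is just Theorem~\ref{main} applied to the (split) finite extension underlying a semi-direct product, combined with the observation that the canonical section realizes $Fix \hat \rho$ as the fixed subspace $Fix \hat \nu$. The only point one should state explicitly is that $\nu(k) = (1_H,k)$ is indeed a left transversal for $H$ sending $1_K$ to $1_G$, which is immediate from the definition of the semi-direct product; after that the identification of the two fixed subspaces is a one-line computation.
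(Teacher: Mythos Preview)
Your proposal is correct and matches the paper's approach: the corollary is stated without proof in the paper, as it is an immediate specialization of Theorem~\ref{main} to the split case, using the identification $Fix\hat\nu = Fix\hat\rho$ for the canonical section $\nu(k)=(1_H,k)$ already recorded in \S\ref{hom}. You have simply written out the details the paper leaves implicit.
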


\begin{ex}\label{klein1}
Consider the fundamental group of the Klein Bottle with the standard presentation
$$
G=\langle \alpha, \beta |\alpha \beta\alpha \beta^{-1}=1\rangle.
$$
The subgroup generated by $\alpha$ and $\beta^2$ is isomorphic to $\mathbb Z^2$ since $\alpha \beta^2=\alpha \beta(\alpha \beta\alpha)=(\alpha \beta \alpha)(\beta\alpha)=\beta(\beta(\alpha)=\beta^2\alpha$. This subgroup is the fundamental group of the $2$-torus as a double cover of the Klein Bottle. Moreover, $G$ admits the following short (non-split) exact sequence
$$
0\to \langle \alpha, \beta^2\rangle \to G \stackrel{p}{\to} \mathbb Z_2=\langle \bar{\beta}|\bar{\beta}^2=\bar 1\rangle \to 0
$$
where the projection $p$ sends $\alpha$ to $\bar 1$ and $\beta$ to $\bar{\beta}$.It follows from Theorem \ref{main} that $\Sigma^1(G)=\Sigma^1(\mathbb Z^2)\cap \partial_{\infty}Fix \hat \nu$ where $\nu:\mathbb Z_2 \to G$ is given by $\bar{\beta}\mapsto \beta$. Thus,
\begin{equation*}
\begin{aligned}
\Sigma^1(G) &=\mathbb S^1 \cap \partial_{\infty}Fix \hat \nu \\
            &=\mathbb S^1 \cap \partial_{\infty}\{\phi\in {\rm Hom}(\mathbb Z^2)|\beta \cdot \phi=\phi\} \\
            &=\mathbb S^1 \cap \partial_{\infty}\{\phi\in {\rm Hom}(\mathbb Z^2)|\phi(\beta h \beta^{-1})=\phi(h), \forall h\in H\}.
\end{aligned}
\end{equation*}
Note that $H=\langle \alpha, \beta^2\rangle$ and $\phi(\beta \alpha \beta^{-1})=\phi(\alpha)$ implies that $\phi(\alpha^{-1})=\phi(\alpha)$ which in turn implies that $\phi(\alpha)=0$. This implies that $Fix \hat \nu=\mathbb R$ and so $\Sigma^1(G)=\{\pm \infty\}$. Since ${\rm Hom}(G,\mathbb R) =Fix \hat \nu=\mathbb R$ is one dimensional, we have $\Omega^1(G)=\Sigma^1(G)=\{\pm \infty\}$.
\end{ex}

\begin{ex}\label{D-infinity}
Consider the infinite dihedral group $D_{\infty}$. It is known that $D_{\infty} \cong \mathbb Z_2 * \mathbb Z_2$, the free product of $\mathbb Z_2$ with $\mathbb Z_2$. Moreover, it is also isomorphic to $\mathbb Z \rtimes_{\rho} \mathbb Z_2$ where the action $\rho$ is the non-trivial one. It is easy to see that $Fix\hat \rho$ is the origin $\{0\}$ so that $\partial_{\infty}Fix \hat \rho=\emptyset$. It follows from Theorem \ref{main} that $\Sigma^1(D_{\infty})=\emptyset=\Omega^1(D_{\infty})$. On the other hand, $D_{\infty}$ admits the following non-split extension
$$
1\to (\mathbb Z_2 * \mathbb Z_2)' \to \mathbb Z_2 * \mathbb Z_2 \to \mathbb Z_2 \times \mathbb Z_2 \to 0
$$
where $Z'$ denotes the commutator subgroup of a group $Z$. Furthermore, $(\mathbb Z_2 * \mathbb Z_2)'$ is isomorphic to the infinite cyclic group $\mathbb Z$ (see e.g. Exercise 10 on p.134 of \cite{johnson}). Again similar arguments show that, using Theorem \ref{main}, that $\Sigma^1(D_{\infty})=\emptyset=\Omega^1(D_{\infty})$.
\end{ex}

\section{The $\Omega$-invariant for finite extensions}\label{omega-finite}

The authors originally conjectured that $\Omega^1(G) = \Omega^1(H) \cap \partial_{\infty}Fix\hat{\nu}$.  This however turned out not to be true as the following examples show.

\begin{ex}\label{ex1}
Recall that the R. Thompson's group $F$ can be given the following presentation
$$
F=\langle x_0,x_1,x_2,...| x_{k}^{-1}x_nx_k=x_{n+1}, k<n\rangle.
$$
The elements $x_0$ and $x_1$ correspond to the following piecewise linear homeomorphisms of the unit interval:
\begin{equation}\label{x0}
x_0(t) \quad = \quad
\left\{
\aligned
& \frac{t}{2}, \qquad & 0\le t\le \frac{1}{2} \\
& t-\frac{1}{4}, \qquad & \frac{1}{2}\le t\le \frac{3}{4} \\
& 2t-1, \qquad & \frac{3}{4}\le t\le 1
\endaligned
\right.
\end{equation}
and
\begin{equation}\label{x1}
x_1(t) \quad = \quad
\left\{
\aligned
& t, \qquad & 0\le t\le \frac{1}{2} \\
& \frac{t}{2}+\frac{1}{4}, \qquad & \frac{1}{2}\le t \le \frac{3}{4} \\
& t-\frac{1}{8}, \qquad & \frac{3}{4}\le t\le \frac{7}{8} \\
& 2t-1, \qquad & \frac{7}{8}\le t\le 1
\endaligned
\right.
\end{equation}

The 180 degree rotation of the square $[0,1] \times [0,1]$ centered at the point $(\frac{1}{2}, \frac{1}{2})$ induces an order 2 automorphism $\rho$  of the group $F$. It is easy to see that $F$ can be generated by $x_0$ and $x_1$.  This automorphism $\rho$ is given by $\rho(x_0)=x_0^{-1}$ and $\rho(x_1)=x_0x_1x_0^{-2}$.  Using the automorphism $\rho$ of $F$, we form the semi-direct product $G=F\rtimes_{\rho} \mathbb Z_2$. The vector space Hom$(G,\mathbb{R}) = \{ \chi | \chi(x_0) = 0 \} \cong \mathbb{R}^1$, and we use Theorem \ref{main} to show that $\Sigma^1(G)= \{ \pm \infty \}$. To see this, first recall that $\Sigma^1(F)^c = \{ [\chi_1], [\chi_2] \}$ where $\chi_1(x_0) = 1$, $\chi_1(x_1) = 0$, and $\chi_2(x_0) = \chi_2(x_1) = -1$.  Since neither of these points are in $\partial Fix \hat{\rho}$, by Theorem \ref{main}, $\Sigma^1(G) = \Sigma^1(F) \cap \partial Fix \hat{\rho} = \{ \pm \infty \}$.  Since Hom$(G,\mathbb{R})$ is one-dimensional, we have that $\Omega^1(G) = \Sigma^1(G)$.  Using the $\pi/2$-neighborhood result of Theorem \ref{hemi}, it follows that $\Omega^1(F)$ is a single arc which contains the north pole $+\infty$ but not the south pole $-\infty$.  Thus $\Omega^1(F) \cap \partial Fix \hat{\rho} = \{ +\infty \}$.  Thus, $\Omega^1(G) \neq \Omega^1(H) \cap \partial_{\infty}Fix\hat{\nu}$ in general.
\end{ex}

\begin{ex}\label{ex2}
Let $H \cong \langle a,b | b^{-1}ab = a^2 \rangle \times \langle c,d | d^{-1}cd = c^2 \rangle \times \langle x,y \rangle$ (so $H$ is the product of two Baumslag-Solitar groups and the free group on two generators), and define the action $\rho$ of  $\mathbb{Z}_2 \cong \langle t | t^2 = 1 \rangle$ on $H$ by $t \cdot a = c, t \cdot b = d, t \cdot c = a, t \cdot d = b, t \cdot x = y$, and $t \cdot y = x$.  Let $G \cong H \rtimes_{\rho} \mathbb{Z}_2$.  The vector space Hom$(H,\mathbb{R}) \cong \mathbb{R}^4$ as any homomorphism must send $a$ and $c$ to zero, and $Fix\hat{\rho} = \{ \phi | \phi(b)=\phi(d); \phi(x)=\phi(y) \} \cong \mathbb{R}^2$.  The complement of $\Sigma^1(H)$ is the set $\{ [\chi] | \chi(b) = \chi(d) = 0 \} \cup \{ [\chi] | \chi(x) = \chi(y) = \chi(b) = 0; \chi(d) = -1 \} \cup \{ [\chi] | \chi(x) = \chi(y) = \chi(d) = 0; \chi(b) = -1 \}$.  Thus, by Theorem~\ref{index}, the complement of $\Sigma^1(G)$ is the two-point set $\{ [\chi] | \chi(b) = \chi(d) = 0 \}$.  By Theorem~\ref{hemi}, $\Omega^1(G)$ is the two-point set $\{ [\chi] | \chi(x) = \chi(y) = 0 \}$.  However, by Theorem~\ref{product-formula}, $\Omega^1(H) = \{ [\chi] | \chi(x)=\chi(y)=0; \chi(b) > 0; \chi(d) > 0 \}$, so $\Omega^1(H) \cap \partial_{\infty}Fix\hat{\rho}$ is the one-point set $\{ [\chi] | \chi(x)=\chi(y)=0; \chi(b)=\chi(d)=1 \}$.
\end{ex}

We do have the following containments.

\begin{prop}\label{contain}
$\Omega^n(H) \cap \partial_{\infty}Fix\hat{\nu} \subseteq \Omega^n(G) \subseteq \Sigma^n(H) \cap \partial_{\infty} Fix \hat{\nu}$.
\end{prop}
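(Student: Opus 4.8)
The plan is to derive both containments from the $\pi/2$-neighborhood characterization of $\Omega^n$ in Theorem~\ref{hemi}, together with the formula $\Sigma^n(G)=\partial_{\infty}Fix\hat{\nu}\cap\Sigma^n(H)$, which holds for every $n\geq 1$ by Theorem~\ref{main} and Remark~\ref{ngeq1}. The one geometric observation underlying everything is this: by Proposition~\ref{finitefix}, ${\rm Hom}(G,\mathbb R)$ is identified with the linear subspace $Fix\hat{\nu}\subseteq {\rm Hom}(H,\mathbb R)$, so $\partial_{\infty}Fix\hat{\nu}$ sits inside $\partial_{\infty}{\rm Hom}(H,\mathbb R)$ as a totally geodesic subsphere. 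Consequently the angular distance between two points of $\partial_{\infty}Fix\hat{\nu}$ is the same whether it is measured in the character sphere of $G$ or in that of $H$, and the open $\pi/2$-neighborhood of a point $e\in\partial_{\infty}Fix\hat{\nu}$ taken in $\partial_{\infty}{\rm Hom}(G,\mathbb R)$ is precisely the intersection with $\partial_{\infty}Fix\hat{\nu}$ of the open $\pi/2$-neighborhood of $e$ taken in $\partial_{\infty}{\rm Hom}(H,\mathbb R)$.

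For the right-hand inclusion, let $e\in\Omega^n(G)$. Applying Theorem~\ref{hemi} to $G$ with the choice $e'=e$ (which lies in every open neighborhood of $e$) gives $e\in\Sigma^n(G)$, so $\Omega^n(G)\subseteq\Sigma^n(G)$. By Theorem~\ref{main} (extended to all $n$ by Remark~\ref{ngeq1}), $\Sigma^n(G)=\partial_{\infty}Fix\hat{\nu}\cap\Sigma^n(H)\subseteq\Sigma^n(H)\cap\partial_{\infty}Fix\hat{\nu}$, which is the claimed inclusion.

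For the left-hand inclusion, take $e\in\Omega^n(H)\cap\partial_{\infty}Fix\hat{\nu}$. To show $e\in\Omega^n(G)$ it suffices, by Theorem~\ref{hemi} applied to $G$, to verify that every $e'$ in the open $\pi/2$-neighborhood of $e$ inside $\partial_{\infty}{\rm Hom}(G,\mathbb R)=\partial_{\infty}Fix\hat{\nu}$ lies in $\Sigma^n(G)$. Fix such an $e'$. By the metric compatibility noted above, $e'$ also lies in the open $\pi/2$-neighborhood of $e$ inside $\partial_{\infty}{\rm Hom}(H,\mathbb R)$; since $e\in\Omega^n(H)$, Theorem~\ref{hemi} applied to $H$ gives $e'\in\Sigma^n(H)$. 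As $e'\in\partial_{\infty}Fix\hat{\nu}$ by construction, Theorem~\ref{main} yields $e'\in\partial_{\infty}Fix\hat{\nu}\cap\Sigma^n(H)=\Sigma^n(G)$. Since $e'$ was arbitrary in that neighborhood, Theorem~\ref{hemi} gives $e\in\Omega^n(G)$.

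I expect the only genuine obstacle to be the geometric bookkeeping in the first paragraph: one must check carefully that restricting from the character sphere of $H$ to the subsphere coming from $G$ neither enlarges nor shrinks the relevant $\pi/2$-neighborhoods, so that Theorem~\ref{hemi} can be invoked coherently for both $H$ and $G$. Once that point is settled, the two containments follow formally from Theorems~\ref{hemi} and~\ref{main}.
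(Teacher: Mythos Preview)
Your argument is correct and follows essentially the same route as the paper's proof: both use Theorem~\ref{hemi} together with the identity $\Sigma^n(G)=\Sigma^n(H)\cap\partial_{\infty}Fix\hat{\nu}$ (from Theorem~\ref{main}/Theorem~\ref{index} and Remark~\ref{ngeq1}), and the key step for the left inclusion is exactly the observation $N^G_{\pi/2}(e)=N^H_{\pi/2}(e)\cap\partial_{\infty}Fix\hat{\nu}$ that you isolate in your first paragraph. Your write-up is somewhat more explicit about why this neighborhood identity holds, but the logical structure is the same.
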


\begin{proof}
To see that $\Omega^n(G) \supseteq \Omega^n(H) \cap \partial_{\infty}Fix\hat{\nu}$, if $[\chi] \in \Omega^n(H) \cap \partial_{\infty}Fix\hat{\nu}$, then the open $\frac{\pi}{2}$-neighborhood of $[\chi]$ in $\partial_{\infty}$Hom$(H,\mathbb{R})$, denoted $N^H_{\pi/2}([\chi])$, is contained in $\Sigma^n(H)$.  Thus, $N^G_{\pi/2}([\chi]) = N^H_{\pi/2}([\chi]) \cap \partial_{\infty} Fix \hat{\nu} \subseteq \Sigma^n(H) \cap \partial_{\infty} Fix \hat{\nu} = \Sigma^n(G)$ which implies $[\chi] \in \Omega^n(G)$.  Further, by Theorem~\ref{index} and remark~\ref{ngeq1}, $\Omega^n(G) \subseteq \Sigma^n(G) = \Sigma^n(H) \cap \partial_{\infty} Fix \hat{\nu}$ which finishes the proof.
\end{proof}

It should be noted that these containments can be strict.  Example~\ref{ex1} shows that the first containment can be strict, and example~\ref{ex2} shows the second containment can be strict. 

Proposition~\ref{contain} leads to the following sufficient conditions to obtain equality.

\begin{thm}\label{finsuff}
$\Omega^n(G) = \Omega^n(H) \cap \partial_{\infty}Fix\widehat{\nu}$ if
\begin{enumerate}
\item Hom$(H,\mathbb{R}) \cong \mathbb{R}^1$,
\item $\Sigma^n(H) = S(H)$ which is the character sphere of Hom$(H,\mathbb{R})$, or
\item $\Sigma^n(H) = \emptyset$.
\end{enumerate}
\end{thm}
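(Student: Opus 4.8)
The plan is to deduce all three cases from the double inclusion already proved in Proposition~\ref{contain},
\[
\Omega^n(H) \cap \partial_{\infty}Fix\widehat{\nu} \ \subseteq\ \Omega^n(G) \ \subseteq\ \Sigma^n(H) \cap \partial_{\infty}Fix\widehat{\nu}.
\]
It therefore suffices, in each of the three cases, to show that the two outer sets coincide; once they do, $\Omega^n(G)$ is squeezed to their common value, which is $\Omega^n(H) \cap \partial_{\infty}Fix\widehat{\nu}$. I will first record the general fact that $\Omega^n(H) \subseteq \Sigma^n(H)$: taking $e' = e$ in Theorem~\ref{hemi} shows that $e \in \Omega^n(H)$ forces $e \in \Sigma^n(H)$. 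So the only thing at issue is the reverse containment of the outer sets, i.e.\ that $\Sigma^n(H) \cap \partial_{\infty}Fix\widehat{\nu} \subseteq \Omega^n(H)$, under each hypothesis.

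For case~(2), if $\Sigma^n(H) = S(H)$ then for every $e$ the open $\tfrac{\pi}{2}$-neighborhood of $e$ lies in $\Sigma^n(H)$, so Theorem~\ref{hemi} gives $\Omega^n(H) = S(H) = \Sigma^n(H)$; both outer sets equal $\partial_{\infty}Fix\widehat{\nu}$ and we are done. For case~(3), if $\Sigma^n(H) = \emptyset$ then $\Omega^n(H) \subseteq \Sigma^n(H) = \emptyset$, so both outer sets are empty and $\Omega^n(G) = \emptyset$ as well.

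For case~(1), the point is that when ${\rm Hom}(H,\mathbb{R}) \cong \mathbb{R}^1$ the character sphere $S(H)$ is the two-point space $\mathbb{S}^0$, in which the open $\tfrac{\pi}{2}$-neighborhood of a point $e$ is $\{e\}$ itself, since its antipode sits at distance $\pi$. Theorem~\ref{hemi} then collapses to $e \in \Omega^n(H) \iff e \in \Sigma^n(H)$, so $\Omega^n(H) = \Sigma^n(H)$, and intersecting the equal sets with $\partial_{\infty}Fix\widehat{\nu}$ (which, $Fix\widehat{\nu}$ being a linear subspace of $\mathbb{R}^1$, is either empty or all of $\mathbb{S}^0$) preserves the equality of the two outer sets.

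I do not anticipate a genuine obstacle here: each case is bookkeeping built on Proposition~\ref{contain} and Theorem~\ref{hemi}, with the additional elementary observation $\Omega^n(H)\subseteq\Sigma^n(H)$. The only spot needing a little care is case~(1), where one must correctly describe the open $\tfrac{\pi}{2}$-neighborhoods inside $\mathbb{S}^0$ and verify that restricting to the subspace $\partial_{\infty}Fix\widehat{\nu}$ does not disturb the equality; this is routine once the $\mathbb{S}^0$ geometry is made explicit.
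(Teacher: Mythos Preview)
Your proposal is correct and follows the same approach as the paper: both reduce to showing that each hypothesis forces $\Sigma^n(H)=\Omega^n(H)$, whereupon Proposition~\ref{contain} squeezes $\Omega^n(G)$ to the common value. The paper's proof is a single sentence asserting this implication; your write-up simply unpacks why each of the three conditions yields $\Sigma^n(H)=\Omega^n(H)$, so the two arguments are essentially identical.
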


\begin{proof}
Each condition implies that $\Sigma^n(H) = \Omega^n(H)$ which gives equality for the left and right ends of the above subset inclusion.
\end{proof}

\begin{rmk}
It is worth noting that many groups $H$ satisfy the conditions in Theorem~\ref{finsuff} such as free groups, free abelian groups, nilpotent groups, polycyclic groups, and the Baumslag-Solitar groups $BS(1,m)$.
\end{rmk}

\begin{ex}\label{ex-n2}
The above conditions are not necessary.  Revisiting example~\ref{ex1}, it was shown in \cite{BGK} that $\Sigma^2(F)$ contains the larger arc but not the smaller arc from $\Sigma^1(F)$.  Therefore, by Theorem~\ref{index} and remark~\ref{ngeq1}, we have $\Sigma^2(F \rtimes_\rho \mathbb{Z}_2) = \Sigma^2(F) \cap \partial_{\infty}Fix\hat{\rho} = \{ + \infty \}$.  Therefore, $\Omega^2(F \rtimes_\rho \mathbb{Z}_2) = \{ + \infty \} = \Omega^2(F) \cap \partial_{\infty}Fix\hat{\rho}$, but Thompson's group $F$ does not satisfy any of the conditions of Theorem~\ref{finsuff}.
\end{ex}

\section{Twisted Conjugacy and the $\Omega$-invariant of extenstions}\label{tcomega}

\subsection{Twisted conjugacy}\label{tc}

Following \cite{TW1}, a group $G$ is said to have the property $R_{\infty}$ if $R(\varphi)=\infty$ for all $\varphi \in {\rm Aut}(G)$ where $R(\varphi)$ denotes the cardinality of the set of $\varphi$-twisted conjugacy classes of elements of $G$ (i.\ e.\ the number of orbits of the left action of $G$ on $G$ via $g \cdot h \mapsto gh\varphi(g)^{-1}$). For instance, $R(1_G)$ is the number of ordinary conjugacy classes of elements of $G$. It has been shown in \cite{KW} that $G$ has property $R_{\infty}$ if $\Omega^n(G)$ consists of a single discrete point. However, for such a group $G$ with $\#\Omega^1(G)=1$, Theorem \ref{hemi} implies that $\Sigma^1(G)\ne - \Sigma^1(G)$ so in particular, $G$ cannot be the fundamental group of a closed $3$-manifold (See \cite[Cor. F]{BNS}). The only known examples of groups $G$ with $\#\Omega^n(G)=1$ are of the form $BS(1,n)\times W$ where $n\ge 2$ and $\#\Omega^n(W) = 0$.

The basic algebraic techniques used in the present paper for showing $R(\varphi)=\infty$ is the relationship among the Reidemeister numbers of group homomorphisms of a short exact sequence. In general, given a commutative diagram of groups and homomorphisms
\begin{equation*}
\begin{CD}
    A    @>{\eta}>>  B  \\
    @V{\psi}VV  @VV{\varphi}V   \\
    A    @>{\eta}>>  B
\end{CD}
\end{equation*}
the homomorphism $\eta$ induces a function $\hat {\eta}:\mathcal R(\psi) \to \mathcal R(\varphi)$ where $\mathcal R(\alpha)$ denotes the set of $\alpha$-twisted conjugacy classes.
For our purposes, we are only concerned with automorphisms.  For more general results, see \cite{daci-peter} and \cite{wong}.  We will use the following lemma; for a proof, see \cite{KW}.

\begin{lem}\label{R-facts}
Consider the following commutative diagram
\begin{equation*}\label{general-Reid}
\begin{CD}
    1 @>>> A    @>>>  B @>>>    C @>>> 1 \\
    @.     @V{\varphi'}VV  @V{\varphi}VV   @V{\overline \varphi}VV @.\\
    1 @>>> A    @>>>  B @>>>    C @>>> 1
 \end{CD}
\end{equation*}
where the rows are short exact sequences of groups and the vertical arrows are group automorphisms.
\begin{enumerate}
\item If $R(\overline \varphi)=\infty$ then $R(\varphi)=\infty$.

\item If $|Fix \overline \varphi|<\infty$ and $R(\varphi')=\infty$ then $R(\varphi)=\infty$.
\end{enumerate}
\end{lem}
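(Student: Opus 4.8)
The plan is to track the diagram through the \emph{induced maps on Reidemeister sets}: any group homomorphism that intertwines two automorphisms induces a map between the corresponding sets of twisted conjugacy classes, and I would apply this to the two horizontal arrows $i\colon A\to B$ and $p\colon B\to C$, writing $\widehat{i}\colon\mathcal R(\varphi')\to\mathcal R(\varphi)$ and $\widehat{p}\colon\mathcal R(\varphi)\to\mathcal R(\overline\varphi)$ for the induced maps.

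Part (1) is the easy half. Since $p$ is onto, $\widehat{p}$ is onto: for $c\in C$ pick $b$ with $p(b)=c$, and then $\widehat{p}([b])=[c]$. A surjection cannot have finite domain and infinite codomain, so $R(\overline\varphi)=\infty$ forces $R(\varphi)=\infty$.

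For part (2) I would argue by examining a single fiber of $\widehat{p}$, namely $\widehat{p}^{-1}([1_C])$. First I would show this fiber equals $\mathrm{image}(\widehat{i})$: the inclusion $\mathrm{image}(\widehat{i})\subseteq\widehat{p}^{-1}([1_C])$ is immediate because $p\circ i$ is trivial, and conversely, if $p(b)$ is $\overline\varphi$-twisted conjugate to $1_C$, then lifting a witnessing element of $C$ to $B$ and $\varphi$-twisted conjugating the representative $b$ by it produces a representative lying in $A=\ker p$, so $[b]_{\varphi}\in\mathrm{image}(\widehat{i})$. Next I would introduce the action of $Fix\,\overline\varphi$ on $\mathcal R(\varphi')$ that makes this picture precise: for $z\in Fix\,\overline\varphi$ choose $b\in p^{-1}(z)$ (so that $b\,a\,\varphi(b)^{-1}\in A$ for every $a\in A$, using $\overline\varphi(z)=z$) and set $z\cdot[a]_{\varphi'}:=[\,b\,a\,\varphi(b)^{-1}\,]_{\varphi'}$; one checks this is independent of the lift $b$, well defined on $\varphi'$-twisted conjugacy classes, and a genuine left action. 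A direct computation — untangling exactly when $\widehat{i}([a])=\widehat{i}([a''])$ and observing that the connecting element of $B$ must project into $Fix\,\overline\varphi$ — then shows that the orbits of this action are precisely the point-preimages of $\widehat{i}$. Since $|Fix\,\overline\varphi|<\infty$ by hypothesis, every orbit is finite, while $\mathcal R(\varphi')$ is infinite by hypothesis; hence there are infinitely many orbits, so $\mathrm{image}(\widehat{i})=\widehat{p}^{-1}([1_C])$ is infinite, and therefore $R(\varphi)=\infty$.

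The one genuinely substantive step is the construction and analysis of the $Fix\,\overline\varphi$-action — verifying well-definedness and the orbit/fiber correspondence — and this is precisely where the finiteness hypothesis on $Fix\,\overline\varphi$ is consumed; everything else is formal bookkeeping with twisted conjugacy. These facts are carried out in detail in \cite{KW}, within the general framework for Reidemeister numbers of group extensions developed in \cite{daci-peter} and \cite{wong}.
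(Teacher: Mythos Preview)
Your argument is correct and is precisely the standard one. Note, however, that the paper does not actually prove this lemma: it states it and refers the reader to \cite{KW} for a proof. So there is no in-paper argument to compare against; your sketch (surjectivity of $\widehat p$ for part~(1), and the $Fix\,\overline\varphi$-action on $\mathcal R(\varphi')$ whose orbits coincide with the fibers of $\widehat i$ for part~(2)) is exactly the proof that the cited reference supplies, and you have correctly identified the references \cite{KW}, \cite{daci-peter}, \cite{wong} where the details live.
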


\subsection{Using $\Omega$ for finite extensions}\label{finomega}

\begin{thm}\label{fintwist}
Let $G$ be a finite extension given by the short exact sequence of groups
$$
1\to H \stackrel{i}{\to} G \stackrel{p}{\to} K \to 1
$$
where $K$ is finite, $H$ is finitely generated, and let $\nu: K\to G$ be a left transversal for $K$ such that $\nu(1_K)=1_G$.  Let $\varphi \in Aut(G)$ such that $H$ is invariant under $\varphi$.
If $\Omega^n(H) \cap \partial_{\infty}Fix\hat{\nu}$ has exactly one rational point, then $R(\varphi) = \infty$.  In particular, if $H$ is characteristic in $G$ and $\Omega^n(H) \cap \partial_{\infty}Fix\hat{\nu}$ has exactly one rational point, then $G$ has the $R_{\infty}$ property.
\end{thm}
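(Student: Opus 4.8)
The plan is to reduce the problem to the normal subgroup $H$ by means of Lemma~\ref{R-facts}, and then to use the hypothesis to manufacture a $\varphi$-equivariant epimorphism of $H$ onto $\mathbb Z$. Since $\varphi(H)=H=\ker p$, the automorphism $\varphi$ descends to an automorphism $\overline\varphi\in\mathrm{Aut}(K)$, and we obtain a commutative ladder whose two rows are the given short exact sequence and whose vertical maps are $\varphi|_H$, $\varphi$, and $\overline\varphi$. As $K$ is finite, $|Fix\,\overline\varphi|<\infty$, so Lemma~\ref{R-facts}(2) shows that it suffices to prove $R(\psi)=\infty$, where $\psi:=\varphi|_H\in\mathrm{Aut}(H)$.

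The next step is to establish that the induced linear automorphism $\psi^*$ of $\mathrm{Hom}(H,\mathbb R)$, given by $\psi^*(\chi)=\chi\circ\psi$, preserves the subspace $Fix\hat\nu$. The isomorphism $\mathrm{Hom}(G,\mathbb R)\cong Fix\hat\nu$ of Proposition~\ref{finitefix} is $\chi\mapsto\chi\circ i$, and from $\varphi\circ i=i\circ\psi$ one reads off that $(\chi\circ\varphi)\circ i=(\chi\circ i)\circ\psi$; thus, under this isomorphism, the map $\varphi^*$ on $\mathrm{Hom}(G,\mathbb R)$ corresponds precisely to the restriction of $\psi^*$ to $Fix\hat\nu$. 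Since $\varphi^*$ is an automorphism, it follows that $\psi^*(Fix\hat\nu)=Fix\hat\nu$, so $\psi^*$ acts on $\partial_{\infty}Fix\hat\nu$. As $\Omega^n(H)$ is invariant under the natural action of $\mathrm{Aut}(H)$ on the character sphere $S(H)$, the map $\psi^*$ also preserves $\Omega^n(H)$, hence preserves $\Omega^n(H)\cap\partial_{\infty}Fix\hat\nu$; and since $\psi^*$ maps the lattice $\mathrm{Hom}(H,\mathbb Z)$ onto itself, it carries rational points of $S(H)$ to rational points.

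Now let $[\chi]$ be the unique rational point of $\Omega^n(H)\cap\partial_{\infty}Fix\hat\nu$. By the previous step $\psi^*([\chi])$ is again a rational point of that set, so $\psi^*([\chi])=[\chi]$; normalize $\chi$ to an epimorphism $\chi\colon H\twoheadrightarrow\mathbb Z$. Then $\chi\circ\psi=r\chi$ for some $r>0$, and since $\chi\circ\psi$ is again onto $\mathbb Z$ we must have $r=1$, i.e.\ $\chi\circ\psi=\chi$. Hence $\ker\chi$ is $\psi$-invariant, and we have a commutative ladder
\begin{equation*}
\begin{CD}
1 @>>> \ker\chi @>>> H @>{\chi}>> \mathbb{Z} @>>> 1 \\
@. @VVV @V{\psi}VV @V{\mathrm{id}}VV @.\\
1 @>>> \ker\chi @>>> H @>{\chi}>> \mathbb{Z} @>>> 1
\end{CD}
\end{equation*}
in which the induced map on $\mathbb Z$ is the identity. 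Since $\mathbb Z$ is infinite abelian, $R(\mathrm{id}_{\mathbb Z})=\infty$, so $R(\psi)=\infty$ by Lemma~\ref{R-facts}(1), and therefore $R(\varphi)=\infty$ by the first step. If moreover $H$ is characteristic in $G$, then every $\varphi\in\mathrm{Aut}(G)$ satisfies $\varphi(H)=H$, so the argument applies to all automorphisms of $G$ and $G$ has the $R_\infty$ property.

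The step I expect to require the most care is the equivariance in the middle paragraph: one must verify that the isomorphism of Proposition~\ref{finitefix} intertwines $\varphi^*$ with the restriction of $\psi^*$ to $Fix\hat\nu$ — so that it is $\psi^*$, and not merely $\varphi^*$, that preserves $Fix\hat\nu$ — and that $\Omega^n$ really is $\mathrm{Aut}(H)$-invariant. Everything else is a bookkeeping application of Lemma~\ref{R-facts} together with the standard identification of rational characters up to positive scaling with epimorphisms onto $\mathbb Z$.
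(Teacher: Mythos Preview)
Your proof is correct and shares the same core idea as the paper's: the unique rational point of $\Omega^n(H)\cap\partial_\infty Fix\hat\nu$ is fixed by the automorphism induced on the character sphere, which forces an equivariant surjection onto $\mathbb Z$ with induced map the identity, and then Lemma~\ref{R-facts}(1) finishes.

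The structural difference is in where you place the reduction. You begin by invoking Lemma~\ref{R-facts}(2)---using $|K|<\infty$ to get $|Fix\,\overline\varphi|<\infty$---so that it suffices to show $R(\varphi|_H)=\infty$; then you work entirely inside $H$ and produce $\chi\colon H\twoheadrightarrow\mathbb Z$ with $\chi\circ\psi=\chi$. The paper never reduces to $H$: it regards the fixed rational character as a character on $G$ via Proposition~\ref{finitefix}, sets $N=\ker\chi\le G$, and shows that the induced $\overline\varphi$ on the rank-one quotient $G/N$ (modulo torsion, $\cong\mathbb Z$) is the identity. Your route is a little cleaner here and makes the equivariance step explicit (your diagram $Q\circ\varphi^*=\psi^*\circ Q$ is exactly what the paper glosses over when it asserts that $\partial_\infty Fix\hat\nu$ is ``invariant under automorphisms''). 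The trade-off is that your first step genuinely uses $|K|<\infty$; the paper's argument does not, which is why it can be reused verbatim in the proof of Theorem~\ref{splittwist} for split extensions with $K$ possibly infinite.
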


\begin{proof}
Suppose $\varphi \in Aut(G)$ with $\varphi(H) = H$, and suppose $\Omega^n(H) \cap \partial_{\infty}Fix\hat{\nu} = \{[\chi]\}$.  Let $N = ker(\chi)$ and $V := {\rm Hom}(G/N,\mathbb{R})$.  Since $[\chi]$ is rational, $G/N$ has rank $1$, so $V$ is $1$-dimensional.  Define $\tilde{\varphi}:\Hom \to \Hom$ by $\tilde{\varphi}(\alpha) = \alpha \circ \varphi$.  Since $\varphi(H) = H$ and both $\Omega^n(H)$ and $\partial_{\infty}Fix\hat{\nu}$ are invariant under automorphisms, $[\tilde{\varphi}(\chi)] \in \Omega^n(H) \cap \partial_{\infty}Fix\hat{\nu}$, so $[\tilde{\varphi}(\chi)] = [\chi]$.  Thus, $\chi \circ \varphi = c\chi$ for some $c \in \mathbb{Z}$, so $\varphi(N) \subseteq N$ and $N$ is invariant under $\varphi$.

The automorphism $\varphi$ induces the map $\bar{\varphi}:G/N \to G/N$ defined by $\bar{\varphi}(gN) = \varphi(g)N$ and the map $\hat{\varphi}:V \to V$ defined by $\hat{\varphi}(\alpha)(gN) = \alpha(\varphi(g)N)$.  We will show $\bar{\varphi} = id$.  Since $\varphi$ is invertible and $N$ is invariant under $\varphi$, we have that $\hat{\varphi}$ is invertible, and $\{ [ \bar{\chi} ] \}$ is a basis for $V$ where $\bar{\chi}:G/N \to \mathbb{R}$ is induced by $\chi$.  Since $\hat{\varphi}$ is invertible, $c = \pm 1$, but since $-[\chi] \not\in \Omega^n(H) \cap \partial_{\infty}Fix\hat{\nu}$, $c = 1$.  Thus, $\hat{\varphi}(\bar{\chi})=\bar{\chi}$ which implies that $\bar{\chi}(gN) = \bar{\chi}(\varphi(g)N)$, and so $g^{-1}\varphi(g) \in N$.  Therefore, $\varphi(g)N = gN$ which implies $\bar{\varphi}(gN) = \varphi(g)N = gN$.  The free abelian group $G/N$ has rank $1$, so $(G/N)/\{{\rm torsion}\} \cong \mathbb{Z}$, and $\bar{\varphi}$ is the identity on $G/N$, so it is also induces the identity on $(G/N)/\{{\rm torsion}\}$.  It is clear that $R(1_{\mathbb{Z}}) = \infty$.  It follows from Lemma~\ref{R-facts} that $R(\bar{\varphi}) = \infty$, and hence, $R(\varphi) = \infty$.  In particular, if $H$ is characteristic in $G$, then $G$ has property $R_{\infty}$.
\end{proof}

\begin{rmk}\label{finite-set1}
It is known that if $\Omega^n$ of a group is finite, then it contains either $0, 1,$ or $2$ points (in the case with two points, the points are antipodal).  Although $\Omega^n(H) \cap \partial_{\infty}Fix\hat{\nu}$ is not equal to $\Omega^n(G)$ in general, the set $\Omega^n(H) \cap \partial_{\infty}Fix\hat{\nu}$ has the same property that if it is finite, then it contains either $0, 1,$ or $2$ (antipodal) points.  This is due to the fact that if $\Omega^n(H) \cap \partial_{\infty}Fix\hat{\nu}$ contains two non-antipodal points, then by Theorem~\ref{hemi}, the arc joining those points will also be in $\Omega^n(H) \cap \partial_{\infty}Fix\hat{\nu}$.
\end{rmk}

\begin{ex}\label{special}
Revisiting example~\ref{ex2}, we showed that $\Omega^1(G)$ was two antipodal points while $\Omega^1(H) \cap \partial_{\infty}Fix\hat{\rho}$ contains exactly one rational point.  The automorphism $\varphi:G \to G$ defined by $\varphi(a,1) = (a,1)$, $\varphi(b,1) = (b,1)$, $\varphi(c,1) = (c,1)$, $\varphi(d,1) = (d,1)$, $\varphi(1,t) = (1,t)$, $\varphi(x,1) = (y,t)$, and $\varphi(y,1) = (x,t)$ is an order two automorphism of $G$ that is not $H$-invariant, so $H$ is not characteristic in $G$.  However, there are automorphisms of $G$ that are $H$-invariant (for example, send $a \mapsto c$, $b \mapsto d$, $c \mapsto a$, $d \mapsto b$, $x \mapsto y$, $y \mapsto x$, and $t \mapsto t$), and by Theorem~\ref{fintwist}, these automorphisms $\phi$ have $R(\phi)=\infty$.  This information cannot be obtained from either \cite{GK} (since $\Sigma^1(G)^c$ is two antipodal points) or \cite{KW} (since $\Omega^1(G)$ is two antipodal points).
\end{ex}

\begin{ex}\label{omega2}
Revisiting example~\ref{ex-n2}, since $\Omega^2(F) \cap \partial_{\infty}Fix\hat{\rho}$ contains exactly one point, any automorphism $\varphi$ of this group that leaves $F$ invariant would have $R(\varphi) = \infty$.  Note that the map given by $(x_0,1) \mapsto (x_0,1); (x_1,1)\to (x_1,t); (1,t)\mapsto (x_0,t)$ defines an automorphism and it does not preserve $F$ so that $F$ is {\it not} characteristic in $G$. The fact that $F$ is not characteristic in $G$ is the reason that the $R_{\infty}$ property of $G$ does {\it not} follow from case (2) of Lemma~\ref{R-facts} from the fact that $F$ has property $R_{\infty}$ \cite{BFG}.  However, by \cite[Theorem 4.3]{KW}, since $\Omega^2(F \rtimes_\rho \mathbb{Z})$ contains exactly one point, the group $F \rtimes_\rho \mathbb{Z}$ has the $R_{\infty}$ property.  
\end{ex}

\subsection{Using $\Omega$ for split extensions}\label{splomega}

\begin{thm}\label{splittwist}
Let $G$ be a split extension given by the short exact sequence of groups
$$
1\to H \stackrel{i}{\to} G \stackrel{p}{\to} K \to 1
$$
$H$ and $K$ are finitely generated, and let $\nu: K\to G$ be a left transversal for $K$ such that $\nu(1_K)=1_G$.  Let $\varphi \in Aut(G)$ such that $H$ is invariant under $\varphi$.
If $(\Omega^n(H) \cap \partial_{\infty}Fix\hat{\nu}) \circledast \Omega^n(K)$ has exactly one point, then $R(\varphi) = \infty$.  In particular, if $H$ is characteristic in $G$ and $(\Omega^n(H) \cap \partial_{\infty}Fix\hat{\nu}) \circledast \Omega^n(K)$ has exactly one rational point, then $G$ has the $R_{\infty}$ property.
\end{thm}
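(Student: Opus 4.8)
The plan is to reduce, in each of the two regimes permitted by the hypothesis, to an infinite cyclic quotient on which the relevant automorphism is the identity, and then apply Lemma~\ref{R-facts}. Write $\varphi' := \varphi|_H \in \mathrm{Aut}(H)$ and let $\overline\varphi \in \mathrm{Aut}(K)$ be the automorphism of $K = G/H$ induced by $\varphi$, so that $\varphi',\varphi,\overline\varphi$ are the vertical arrows of a commutative ladder of short exact sequences of the sort to which Lemma~\ref{R-facts} applies. By Proposition~\ref{fix}, together with the fact that $\partial_{\infty}$ of a product of real vector spaces is the spherical join of the $\partial_{\infty}$'s, we have $\partial_{\infty}\mathrm{Hom}(G,\mathbb R) = (\partial_{\infty}Fix\hat\nu)\circledast(\partial_{\infty}\mathrm{Hom}(K,\mathbb R))$. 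The sets $\Omega^n(K)$ and $\Omega^n(H)\cap\partial_{\infty}Fix\hat\nu$ are invariant under $\widetilde{\overline\varphi}:\beta\mapsto\beta\circ\overline\varphi$ and under $\widetilde{\varphi'}:\alpha\mapsto\alpha\circ\varphi'$ respectively; the only nonobvious point is that $Fix\hat\nu$ is $\widetilde{\varphi'}$-stable, which holds because if $\alpha:H\to\mathbb R$ is $K$-invariant and we write $\varphi(\nu(k))=h_k\nu(\ell)$, then $\alpha\bigl(\varphi(\nu(k))\,\varphi(h)\,\varphi(\nu(k))^{-1}\bigr)=\alpha\bigl(\nu(\ell)\varphi(h)\nu(\ell)^{-1}\bigr)=\alpha(\varphi(h))$ as $\alpha$ has abelian target. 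Finally, for nonempty $A,B$ the join $A\circledast B$ contains the whole geodesic arc joining any point of $A$ to any point of $B$ and is therefore infinite; hence the assumption that $(\Omega^n(H)\cap\partial_{\infty}Fix\hat\nu)\circledast\Omega^n(K)$ is a single point forces exactly one of the two join factors to be empty and the other to be a single point, which we take to be rational (as in Theorem~\ref{fintwist}—rationality is what makes the passage to a $\mathbb Z$-quotient possible).

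\textbf{Case 1.} Suppose $\Omega^n(H)\cap\partial_{\infty}Fix\hat\nu=\emptyset$ and $\Omega^n(K)=\{[\overline\chi]\}$ with $[\overline\chi]$ rational. Since this is a one-point $\widetilde{\overline\varphi}$-invariant set, $\overline\chi\circ\overline\varphi=r\overline\chi$ with $r>0$; since $\widetilde{\overline\varphi}$ preserves the integral lattice of $\mathrm{Hom}(K,\mathbb R)$ and $[\overline\chi]$ is rational, $r=\pm1$, so $r=1$. Hence $\overline\varphi$ preserves $\overline N:=\ker\overline\chi$ and induces the identity on $K/\overline N\cong\mathbb Z$. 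As $R(1_{\mathbb Z})=\infty$, Lemma~\ref{R-facts}(1) applied to $1\to\overline N\to K\to K/\overline N\to1$ gives $R(\overline\varphi)=\infty$, and applied to $1\to H\to G\to K\to1$ then gives $R(\varphi)=\infty$.

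\textbf{Case 2 (the crux).} Suppose instead $\Omega^n(K)=\emptyset$ and $\Omega^n(H)\cap\partial_{\infty}Fix\hat\nu=\{[\chi_0]\}$ with $[\chi_0]$ rational; then $\chi_0\in Fix\hat\nu$, i.e.\ $\chi_0$ is $K$-invariant. Exactly as in Case 1, now using $\widetilde{\varphi'}$-invariance, one gets $\chi_0\circ\varphi'=\chi_0$, so $\varphi'$ preserves $N:=\ker\chi_0$ and is the identity on $H/N\cong\mathbb Z$. Since $\chi_0$ is $K$-invariant, $N$ is normalized by every $\nu(k)$ and by $H$, hence $N\trianglelefteq G$; and the induced $K$-action on $H/N\cong\mathbb Z$ is trivial, since a $\pm1$-action on $\mathbb Z$ respecting the embedding $H/N\hookrightarrow\mathbb R$ induced by $\chi_0$ must be trivial. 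Thus $G/N$ fits in a split short exact sequence $1\to\mathbb Z\to G/N\to K\to1$ in which $\mathbb Z=H/N$ is central, so $G/N\cong\mathbb Z\times K$ with $\mathbb Z$-factor $H/N$. As $\varphi(N)=\varphi'(N)=N$, $\varphi$ descends to an automorphism $\psi$ of $G/N\cong\mathbb Z\times K$ which is the identity on the $\mathbb Z$-factor. Passing to the maximal torsion-free abelian quotient $\mathbb Z\times K\twoheadrightarrow\mathbb Z^{1+d}$, $d=\mathrm{rk}\,K^{\mathrm{ab}}$ (a $\psi$-equivariant quotient), the induced $\psi_0\in\mathrm{GL}_{1+d}(\mathbb Z)$ fixes the first basis vector, so $\psi_0-I$ is singular and $R(\psi_0)=|\mathrm{coker}(\psi_0-I)|=\infty$. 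Two applications of Lemma~\ref{R-facts}(1)—to $1\to T\to\mathbb Z\times K\to\mathbb Z^{1+d}\to1$ and then to $1\to N\to G\to G/N\to1$—yield $R(\varphi)=\infty$.

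The displayed ``in particular'' is then immediate: if $H$ is characteristic then every $\varphi\in\mathrm{Aut}(G)$ satisfies $\varphi(H)=H$, so $R(\varphi)=\infty$ for all $\varphi$, i.e.\ $G$ has property $R_{\infty}$. I expect Case 2 to be the main obstacle: knowing only that $\varphi|_H$ acts trivially on $H/\ker\chi_0\cong\mathbb Z$, it is not clear that this lifts to $G$, because the $1$-cocycle describing $\varphi$ on the transversal $\nu(K)$ can prevent $\chi_0$ from extending to a $\varphi$-invariant character of $G$; the way around this is the observation that $K$-invariance of $\chi_0$ makes $\ker\chi_0$ normal in $G$ with $G/\ker\chi_0$ a genuine direct product $\mathbb Z\times K$, which lets one do the Reidemeister count upstairs in that product. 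Verifying this step—normality of $\ker\chi_0$, triviality of the induced $K$-action on $H/\ker\chi_0$, and splitting of the resulting central extension—is the technical heart, while the join dichotomy, the $\widetilde{\varphi'}$-stability of $Fix\hat\nu$, and the abelianization bookkeeping are routine.
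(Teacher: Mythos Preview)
Your proof is correct and follows the same two-case split as the paper. In your Case~1 (the paper's second case) you simply unpack the citation to \cite{KW} that the paper invokes to conclude $R(\overline\varphi)=\infty$; this is the same argument written out.

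Your Case~2 (the paper's first case) is where the two diverge. The paper says only that ``the proof follows the proof of Theorem~\ref{fintwist}.'' Taken literally, the fintwist argument extends $\chi_0$ to a character $\hat\chi$ of $G$, sets $\hat N=\ker\hat\chi$, and uses $\hat\chi\circ\varphi=\hat\chi$ to conclude that $\varphi$ induces the identity on the rank-one quotient $G/\hat N$. That step is unproblematic when $K$ is finite, because then ${\rm Hom}(G,\mathbb R)=Fix\hat\nu$ and the extension of $\chi_0$ to $G$ is unique and automatically $\tilde\varphi$-equivariant. In the genuine split case with $K$ infinite this is exactly the obstacle you flag: writing $\varphi(\nu(k))=h_k\nu(\ell)$, one has $\hat\chi(\varphi(\nu(k)))=\chi_0(h_k)$, which need not vanish, so $\ker\hat\chi$ need not be $\varphi$-invariant. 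Your workaround---taking instead $N=\ker\chi_0\le H$, showing $N\trianglelefteq G$ with $G/N\cong\mathbb Z\times K$ (using the $K$-invariance of $\chi_0$), and then passing to the torsion-free abelianization $\mathbb Z^{1+d}$ where the induced automorphism fixes the first basis vector so that $\psi_0-I$ is singular---is a clean and correct way around this. What the paper's terser approach buys is brevity; what yours buys is a self-contained argument that does not silently rely on a feature (unique extension of characters) specific to the finite-quotient setting.
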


\begin{proof}
For $(\Omega^n(H) \cap \partial_{\infty}Fix\hat{\nu}) \circledast \Omega^n(K)$ to have exactly one rational point, either $\Omega^n(H) \cap \partial_{\infty}Fix\hat{\nu}$ contains exactly one rational point or $\Omega^n(K)$ contains exactly one rational point.  In the case where $\Omega^n(H) \cap \partial_{\infty}Fix\hat{\nu}$ contains exactly one rational point, the proof follows the proof of Theorem~\ref{fintwist}.

In the case where $\Omega^n(K)$ contains exactly one rational point, it follows from \cite{KW} that $K$ has property $R_{\infty}$.  Since $\varphi$ is $H$-invariant, $\varphi$ induces an automorphism $\bar{\varphi}$ on $K$.  Therefore, $R(\bar{\varphi}) = \infty$.  By Lemma~\ref{R-facts}, we have $R(\varphi) = \infty$.

In particular, if $H$ is characteristic in $G$, then $G$ has property $R_{\infty}$.
\end{proof}

Just as in remark~\ref{finite-set1}, if $(\Omega^1(H) \cap \partial_{\infty}Fix\hat{\nu}) \circledast \Omega^1(K)$ is finite, then it will either contain $0, 1,$ or $2$ (antipodal) points. 

It is fair to wonder about the conjecture that $\Omega^1(G) = (\Omega^1(H) \cap \partial_{\infty}Fix\hat{\nu}) \circledast \Omega^1(K)$.  The authors know of examples where the containment $(\Omega^1(H) \cap \partial_{\infty}Fix\hat{\nu}) \circledast \Omega^1(K) \subseteq \Omega^1(G)$ is false.  Example~\ref{ex1} shows the reverse containment $\Omega^1(G) \subseteq (\Omega^1(H) \cap \partial_{\infty}Fix\hat{\nu}) \circledast \Omega^1(K)$ is also false.  Are there sufficient and/or necessary conditions where the conjecture does hold?

\noindent

\noindent
{\Small
\begin{tabbing}
Nic Koban \hphantom{xxxxxxxxxxxxxxxxxxxxxxxxxx} \= Peter Wong\\
Department of Mathematics \> Department of Mathematics\\
University of Maine Farmington \> Bates College\\
Farmington, ME 04938 \> Lewiston, ME 04240 \\
USA \> USA\\
nicholas.koban@maine.edu \> pwong@bates.edu\\
\end{tabbing}
}

\begin{thebibliography}{}

\bibitem{BG1}
R. Bieri and R. Geoghegan, {\it Connectivity Properties of Group Actions on Non-Positively Curved Spaces}, Mem. Amer. Math. Soc., Number 765, 2003.

\vspace{5pt}
\bibitem{BG} R. Bieri and R. Geoghegan, Sigma invariants of direct product of groups, {\em Groups Geom. Dyn.} {\bf 4} (2010), no. 2, 251-–261.

\vspace{5pt}
\bibitem{BGK} R. Bieri, R. Geoghegan, and D. Kochloukova, The sigma invariants of Thompson's group $F$, {\em Groups Geom. Dyn.} {\bf 4} (2010), no. 2, 263–-273.

\vspace{5pt}
\bibitem{BNS}
R. Bieri, W. Neumann, and R. Strebel, A Geometric Invariant of Discrete Groups, {\em Invent. Math.} {\bf 90} (1987), no. 3, 451--477.

\vspace{5pt}
\bibitem{BR}
R. Bieri and B. Renz, Valuations on Free Resolutions and Higher Geometric Invariants of Groups, {\em Comment. Math. Helv.} {\bf 63} (1988), no. 3, 464--497.

\vspace{5pt}
\bibitem{BS}
R. Bieri and R. Strebel, {\it Geometric Invariants for Discrete Groups},
(monograph in preparation).

\vspace{5pt}
\bibitem{BFG} C. Bleak, A. Fel'shtyn and D. Gon\c calves, Twisted conjugacy classes in R. Thompson's group $F$, {\em Pacific J. Math.} {\bf 238} (2008), 1--6.

\vspace{5pt}
\bibitem{BH}
M. Bridson and A. Haefliger, {\it Metric Spaces of Non-Positive Curvature}, Grundlehren der Mathematischen Wissenschaften {\bf 319}.  Springer-Verlag, Berlin, 1999.

\vspace{5pt}
\bibitem{CFP}
J. Cannon, W. Floyd, and W. Parry, Introductory notes on Richard Thompson's groups, {\em L'Enseign. Math.} {\bf 42} (1996), no. 1, 215--256.

\vspace{5pt}
\bibitem{fel-daci} A. L. Fel'shtyn and D. Gon\c calves, Reidemeister numbers of any automorphism of Baumslag-Solitar groups is infinite, in: Geometry and Dynamics of Groups and Spaces, Progress in Mathematics, v.265 (2008), 286--306.

\vspace{5pt}
\bibitem{GK} D. Gon\c calves and D. Kochloukova, Sigma theory and twisted conjugacy classes, {\em Pacific J. Math.}, {\bf 247} (2010), 335–-352.

\vspace{5pt}
\bibitem{daci-peter} D. Gon\c calves and P. Wong, Homogeneous spaces in coincidence theory II, Forum Math. {\bf 17}, 297--313 (2005).

\vspace{5pt}
\bibitem{daci-peter4} D. Gon\c calves and P. Wong, Twisted conjugacy for virtually cyclic groups and crystallographic groups, in: Combinatorial and Geometric Group Theory, Trends in Mathematics, 119--147, Springer Basel, 2010.

\vspace{5pt}
\bibitem{jiang} B. Jiang, Lectures on Nielsen Fixed Point
Theory, Contemporary Mathematics \textbf{14}, American Mathematical Society,
Providence, 1983.

\vspace{5pt}
\bibitem{johnson} D.L. Johnson, Presentations of Groups, LMSST \textbf{14}, Cambridge University Press,
Cambridge, 1990.

\vspace{5pt}
\bibitem{K1} N. Koban, Controlled topology invariants of translation actions, {\em Topol. Appl.} {\bf 152} (2006), 1975--1993.

\vspace{5pt}
\bibitem{K2} N. Koban, The geometric invariants $\Omega\sp n$ of a product of groups, {\em Geom. Dedicata} {\bf 124} (2007), 133--141.

\vspace{5pt}
\bibitem{KW} N. Koban and P. Wong, A relationship between twisted conjugacy classes and the geometric invariants $\Omega^n$, {\em Geom. Dedicata}, {\bf 151} (2011), 233--243.

\vspace{5pt}
\bibitem{KW3} N. Koban and P. Wong, The geometric invariants of group extensions part I: finite extensions, arXiv \# 1103.0313.

\vspace{5pt}
\bibitem{KW4} N. Koban and P. Wong, The geometric invariants of group extensions part II: split extensions, arXiv \# 1103.0315.

\vspace{5pt}
\bibitem{MMV} J. Meier, H. Meinert, and L. VanWyk, Highter generation subgroup sets and the $\Sigma$-invariants of graph groups, {\em Comment. Math. Helv.}, {\bf 73} (1998), 22--44.

\vspace{5pt}
\bibitem{schutz} D. Sch\"utz, On the direct product conjecture for sigma invariants, {\em Bull. London Math. Soc.} {\bf 40} (2008), 675--684.

\vspace{5pt}
\bibitem{TW1}
J. Taback and P. Wong, Twisted conjugacy and quasi-isometry invariance for generalized solvable Baumslag-Solitar groups, {\em Jour. London Math. Soc. (2)} {\bf 75} (2007), 705-717.

\vspace{5pt}
\bibitem{wong} P. Wong, Reidemeister number, Hirsch rank, coincidences on polycyclic groups of solvmanifolds,{\em J. Reine Angew. Math.} {\bf 524}, 185-204 (2000).
\end{thebibliography}
\end{document}